\documentclass{article}
\usepackage{amsmath,amssymb,
color,theorem}

\setlength{\headheight}{0cm} \setlength{\headsep}{0cm}
\setlength{\oddsidemargin}{0cm} \setlength{\evensidemargin}{0cm}
\setlength{\textheight}{22cm} \setlength{\textwidth}{16cm}

\newcommand{\hide}[1]{}
 \newcommand{\tmop}[1]{\ensuremath{\operatorname{#1}}}
 \newcommand{\tmstrong}[1]{\textbf{#1}}
 \newcommand{\tmtextbf}[1]{{\bfseries{#1}}}
 \newcommand{\tmtextrm}[1]{{\rmfamily{#1}}}
 \newcommand{\tmtextup}[1]{{\upshape{#1}}}
\newcommand{\mathup}[1]{{\rm{#1}}}

\newtheorem{defn}{Definition}

\newtheorem{lemma}[defn]{Lemma}
\newtheorem{proposition}[defn]{Proposition}
\newtheorem{theorem}[defn]{Theorem}

\newtheorem{remark}[defn]{Remark}
\newtheorem{notation}[defn]{Notation}
\newtheorem{example}[defn]{Example}

           {\vspace{3.3mm}
           \noindent{\bf #1}\it}%
           {\vspace{3.3mm}}

\newenvironment{proof}[1]{
  \trivlist \item[\hskip \labelsep{\it #1}]}{\hfill\mbox{$\square$}
  \endtrivlist}

\begin{document}

\title{Zero-nonzero and real-nonreal sign determination}
\author{Daniel Perrucci$^{\flat, \natural}$\footnote{Partially supported by the following grants: 
PIP 099/11 CONICET and UBACYT 20020090100069 (2010/2012).}  , Marie-Fran\c{c}oise Roy$^{\diamondsuit}$,
\\[3mm]
{\small ${\flat}$ Departamento de Matem\'atica, FCEN, Universidad de Buenos Aires, Ciudad Universitaria, 
1428 Buenos Aires, Argentina}\\ 
{\small ${\natural}$ CONICET, Argentina}\\ 
{\small ${\diamondsuit}$ IRMAR (UMR CNRS 6625), Universit\'e de Rennes 1, 
Campus de Beaulieu, 35042 Rennes, cedex,  France}
}

\maketitle

\begin{abstract}
We consider first the zero-nonzero determination problem, which consists in determining the list
of zero-nonzero conditions realized by a finite list of polynomials on a finite set $Z \subset
\mathup{C}^k$ with $\mathup{C}$ an algebraic closed field. We describe an algorithm 
to solve the zero-nonzero determination problem and we perform its bit complexity analysis. 
This algorithm, which is in many ways an adaptation of the
methods used to solve the more classical sign determination problem, presents also  new 
ideas which can  be used to improve 
sign determination. Then, we consider the real-nonreal sign determination problem, 
which
deals
with both the sign determination and the zero-nonzero determination
problem. We describe an algorithm to 
solve 
the real-nonreal sign determination problem, 
we perform its bit complexity analysis and we discuss this problem in a parametric context. 
\end{abstract}

\section{Introduction}

Let $\mathup{L}$ be a field and $\mathup{C}$ an algebraically closed extension of $\mathup{L}$. 
Consider a finite set $Z \subset \mathup{C}^k$
and a finite list of polynomials $\mathcal{P} = P_1, \ldots, P_s$  in
$\mathup{L}[X_1, \ldots, X_k]$;  
the zero-nonzero determination problem is the problem of computing 
the
zero-nonzero conditions of $\mathcal{P}$ which are
realized on $Z$. In order to better explain this, we introduce some notation and definitions.

For $a \in \mathup{C}$, its {\tmstrong{invertibility}} is defined as follows:
\[ \begin{cases}
     \tmop{inv} (a) = 0 & \tmop{if} a = 0,\\
     \tmop{inv} (a) = 1^{} & \tmop{if} a \ne 0.\\
   \end{cases} 
\]
Let $I \subset \left\{ 1, \ldots, s
\right\}$. Given a {\tmstrong{zero-nonzero condition}} $\sigma \in \{0, 1 \}^I$, the 
\tmtextbf{realization of $\sigma$ on $Z$} is
\[
\tmop{Reali}
(\sigma, Z) = \Big\{x \in Z \mid \bigwedge_{i
\in I} \tmop{inv} (P_i (x)) = \sigma(i) \Big\}  
\]
and we denote by ${c
(\sigma, Z})$ the cardinal of $\tmop{Reali}
(\sigma, Z)$. 
We write 
$\tmop{Feas}
( \mathcal{P},
Z)$ for the list of $\sigma \in \{0, 1\}^{\left\{ 1,
\ldots, s \right\}}$ such that ${c
(\sigma, Z})$ is not zero, and
$c
( \mathcal{P}, Z)$ for the corresponding list of cardinals.
The \tmtextbf{zero-nonzero determination problem} is to determine $\tmop{Feas}
( \mathcal{P},
Z_{})$ and $c
( \mathcal{P}, Z)$.

Typically, the set $Z$ is not known explicitelly, 
but given as the 
complex
zero set of a 
polynomial system; therefore, to solve the zero-nonzero determination problem it is not possible
to simply evaluate the polynomials in  $\mathcal{P}$ separately at each point of $Z$, and a more
clever strategy is needed.

The zero-nonzero determination problem is
anologous to
the more 
classical sign 
determination problem, which we recall now. 
Let $\mathup{K}$ be an ordered field and $\mathup{R}$  a real closed extension of $\mathup{K}$. 
For $a \in \mathup{R}$, its {\tmstrong{sign}} is defined as follows:
\[ \begin{cases}
   \tmop{sign} (a) = 0 & \tmop{if} a = 0,\\
     \tmop{sign} (a) = 1^{} & \tmop{if} a > 0,\\
     \tmop{sign} (a) = -1 & \tmop{if} a < 0.\\
   \end{cases} \]

Consider a finite set $W \subset \mathup{R}^k$
and a finite list of polynomials $\mathcal{P} = P_1, \ldots, P_s$  in
$\mathup{K}[X_1, \ldots, X_k]$. 
Given a {\tmstrong{sign condition}} $\tau \in \{0, 1, - 1\}^{\{1, \dots, s\}}$, the
\tmtextbf{realization of $\tau$ on $W$} is
 \[ 
 \tmop{Reali}
 _{\rm sign} 
 (\tau, W) = \Big\{ x \in W \mid
     \bigwedge_{i = 1, \dots, s} \tmop{sign}(P_i (x)) = \tau(i)\Big\} 
 \]
and we denote by $c
_{\rm sign}
(\tau, W)$ the cardinal of $\tmop{Reali}
_{\rm sign}
(\tau, W)$. 
We write 
$\tmop{Feas}_{\rm sign} ( \mathcal{P},
W)$ for the list of $\tau \in \{0, 1, -1\}^{\left\{ 1,
\ldots, s \right\}}$ such that $c
_{\rm sign}
(\tau, W)$ is not zero, and
$c
_{\rm sign}
( \mathcal{P}, W)$ for the corresponding list of cardinals.
The \tmtextbf{sign determination problem} is to determine $\tmop{Feas}
_{\rm sign} 
( \mathcal{P},
W)$ and $c
_{\rm sign}
( \mathcal{P}, W)$. Once again, 
the set $W$ is typically not known explicitelly, 
but given as the
real
zero set of a 
polynomial system.

Let $Q \in \mathup{K}[X_1, \dots, X_k]$, the {\tmstrong{Tarski-query}} of $Q$ for
$W$  
is 
\[ 
\tmop{TaQu} (Q, W) = \sum_{x \in W}
   \tmop{sign} (Q (x)) = \tmop{card} \left( \left\{ x \in W \mid
   Q \left( x \right) > 0 \right\} \right) - \tmop{card} \left( \left\{ x \in
   W \mid Q \left( x \right) < 0 \right\} \right). 
\]
Tarski-queries play a leading role in the most efficient algortihms to solve the sign determination problem 
(\cite{RS, Canny, BPR}). In fact, these algorithms consist in 
computing
a relevant list of  Tarski-queries and solving 
linear systems 
with integer coefficients  
having a specific structure. 
Suppose that the polynomial system defining $W$ 
is in $\mathup{K}[X_1, \dots, X_k]$. Then, in the mentioned algorithms for sign 
determination there are three different kind of operations:
\begin{itemize}
\item sign comparisons,
\item operations in $\mathbb{Q}$, which appear in the linear  solving steps,
\item operations in $\mathup{K}$, which appear in the Tarski-query computations. 
\end{itemize}
Regarding the complexity analysis, in \cite[Chapter 10]{BPR} 
the Tarski-query computation is 
considered as a blackbox. 
Indeed, there exist many well-known methods to compute them, and 
depending on the setting, the application of one method or another is convenient. 
However, when asymptotically fast methods for computing the Tarski-query are used
in the univariate case,  the cost of solving the linear system dominates the overall complexity
as already noticed in  \cite[Section 3.3]{Canny}.
In \cite{Per}, a 
method for solving the 
specific linear systems arising in the sign determination
algorithm is given, leading to a complexity improvement
(see \cite[Corollary 2]{Per}).

The real-nonreal sign determination problem is 
a compressed way of dealing with both the sign determination and the zero-nonzero determination
problem as we explain now. 
Let $\mathup{K}$ be an ordered field, $\mathup{R}$ a real closed extension of $\mathup{K}$
and $\mathup{C} = \mathup{R}[i]$. Consider a finite set $Z \subset \mathup{C}^k$, 
one more time, typically given as the complex zero set of polynomial system,  
and a finite
list of polynomials ${\cal P} = P_1, \dots, P_s$ in $\mathup{K}[X_1, \dots, X_k]$. 
We define
\begin{eqnarray*}
  Z_{\mathup{R}} & = & Z \cap \mathup{R}^k, \\
  Z_{\mathup{C} \setminus \mathup{R}} & = & Z \cap \left( \mathup{C}^k
  \setminus \mathup{R}^k \right).
\end{eqnarray*}
The \tmstrong{real-nonreal sign determination problem} is
to determine 
$\tmop{Feas}_{\rm sign} ( \mathcal{P}, Z_{\mathup{R}})$, 
$\tmop{Feas} ( \mathcal{P}, Z_{\mathup{C} \setminus \mathup{R}})$,  
$c_{\rm sign} ( \mathcal{P},$ $Z_{\mathup{R}})$ and
$c( \mathcal{P},
Z_{\mathup{C} \setminus \mathup{R}})$.
By solving the real-nonreal sign determination problem we obtain a complete
description of the sign and invertibility of the polynomials in ${\cal P}$ on $Z$. 

This paper serves 
several purposes.
First, we give an algorithm 
for zero-nonzero determination. This algorithm is 
based on the same principles that sign determination, 
the role of Tarski-queries 
being played by  
invertibility-queries, which we introduce now.
Let $Q \in \mathup{L}[
X_1, \ldots, X_k]$, the {\tmstrong{invertibility-query}} of $Q$ for $Z$
is
\[ \tmop{Qu} (Q, Z) = \sum_{x \in Z} \tmop{inv} (Q (x)) = \tmop{card} \left(
   \left\{ x \in Z \mid Q \left( x \right) \ne 0 \right\} \right) . \]
Then we 
perform
a complexity analysis of this algorithm as follows: we 
consider the 
invertibility-queries as a blackbox, 
and estimate the \emph{bit complexity}
of the zero-nonzero comparisons and the operations in $\mathbb{Z}$.  
Note that the algorithm we present here is 
not a straightforward adaptation of the known algorithms for sign determination.
In fact, in order to obtain a good bit complexity bound,  we introduce
some new definitions (i.e. the compression and a matrix summarizing the useful information) which
can be used in turn to improve  the known algorithms for sign determination 
(see \cite[Chapter 10]{BPRcurrent}). 

Then, combining sign and zero-nonzero determination, we give an algorithm for real-nonreal 
sign determination and perform a complexity analysis in a similar way.  
A final
purpose of this paper is to discuss real-nonreal 
sign determination
in a parametric context.

This paper is organized as follows. 
In Section \ref{sec:comp} we 
give 
an algorithm for zero-nonzero determination
and 
perform its
bit complexity analysis considering the invertibility-query as a blackbox.
In Section \ref{sec:nonrealqueries} we 
give 
an algorithm for real-nonreal sign determination
and perform
its bit
complexity analysis in a similar way.
In Section \ref{sec:queries} we explain the various existing methods for
computing the Tarski-queries and invertibility-queries
in the univariate and
multivariate case
and we deduce the bit complexity of zero-nonzero 
and real-nonreal sign determination in the univariate case. 
Finally, in Section \ref{sec:parameters} we discuss real-nonreal sign determination
in a parametric context.

\section{The zero-nonzero determination problem}\label{sec:comp}

We recall that $\mathup{L}$ is a field and $\mathup{C}$ an algebraically closed extension of $\mathup{L}$, 
$Z \subset \mathup{C}^k$  a finite set, 
$\mathcal{P} = P_1, \ldots, P_s$ a list of polynomials in
$\mathup{L}[X_1, \ldots, X_k]$, $I$ a subset of $\{ 1, \ldots, s
\}$ and 
$\sigma \in \{0, 1 \}^I$ a 
zero-nonzero condition.

\subsection{Definitions and properties}

Given $J \subset I \subset \{ 1, \ldots, s \}$ and $\sigma \in \{0, 1\}^{I}$, 
we write  $\mathcal{P}^J$
for~$\prod_{i \in J}^{} P_i^{}$ and $\sigma^J$ for $\prod_{i \in J} \sigma
(i)^{}$. 
Note that when $\tmop{Reali} (\sigma, Z) \ne
\emptyset$, the value of $\tmop{inv} ( \mathcal{P}^J (x)
)$ is fixed as $x$ varies in $\tmop{Reali} (\sigma, Z)$ and is equal to
$\sigma^J$. By convention 
$\mathcal{P}^{\emptyset} = 1$, 
$\sigma^{\emptyset} = 1$, 
and 
$\{  0, 1 \}^{\emptyset} = \{
\emptyset \}$.

For a fixed $I \subset \left\{ 1,
\ldots, s \right\}$, 
we consider the lexicographical order on  $\{0, 1\}^{I}$ (with $0 < 1$), 
identifying a zero-nonzero condtion in $\{0, 1\}^{I}$ with a 
bit string of length 
$\tmop{card}(I)$. Throughout this paper, all the lists 
of zero-nonzero conditions
we consider are ordered 
and have no repetitions. 
By the union of 
two disjoint lists we mean their ordered union.

Given $I \subset \{ 1, \dots, s \}$ and 
a list $A = \left[ I_1, \dots, I_m \right]$ of subsets of $I$, 
we define $\tmop{Qu} (
\mathcal{P}^A, Z)$ as the vector with coordinates $\tmop{Qu} (
\mathcal{P}^{I_1}, Z), \ldots, \tmop{Qu} ( \mathcal{P}^{I_m}, Z)$.
Also, given a list 
$\Sigma = [\sigma_1, \dots, \sigma_n]$ of zero-nonzero condiditions in 
$\{0, 1\}^{I}$, we define $c(\Sigma, Z)$ as the
vector with coordinates $c(\sigma_1, Z), \dots, c(\sigma_n, Z)$.
The \tmtextbf{matrix of $A$ on $\Sigma$} 
is the $m \times n$ matrix $\tmop{Mat} (A, \Sigma)$
whose {$i, j$-th} entry is $\sigma_j^{I_i}$ for $i = 1,
\ldots, m$, $j = 1, \ldots, n$.
By convention $\tmop{Mat} (\emptyset, \emptyset)=\emptyset$ 
(i.e. the empty matrix with $0$ rows and columns) and is invertible.

Let $\mathcal{P}_I=\{P_i\mid i\in I\}$. With this notation, we have the following:
\begin{proposition}
\label{pro:matrix of signs} Let $I \subset \{1, \dots, s\}$ and 
 $\Sigma  \subset \{0, 1\}^I$ 
with $\tmop{Feas}(
\mathcal{P}_I
,Z) \subset \Sigma$.
Then
  \[ \tmop{Mat} (A, \Sigma) \cdot c (\Sigma, Z) = \tmop{Qu} ( \mathcal{P}_I^A,
     Z) . \]
\end{proposition}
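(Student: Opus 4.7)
The plan is to compute the $i$-th coordinate of each side directly and show they match entry by entry, by partitioning $Z$ according to the zero-nonzero condition each point realizes.

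First I would fix $i \in \{1, \dots, m\}$ and rewrite the $i$-th entry of the right-hand side. By definition, $\tmop{Qu}(\mathcal{P}^{I_i}, Z) = \sum_{x \in Z} \tmop{inv}(\mathcal{P}^{I_i}(x))$. The key observation is that $\tmop{inv}$ is multiplicative on the field $\mathup{C}$: since $\mathup{C}$ is a domain, $ab=0$ iff $a=0$ or $b=0$, so $\tmop{inv}(ab) = \tmop{inv}(a)\tmop{inv}(b)$. Hence for every $x \in Z$, $\tmop{inv}(\mathcal{P}^{I_i}(x)) = \prod_{\ell \in I_i} \tmop{inv}(P_\ell(x))$.

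Next I would partition $Z$ by zero-nonzero condition in $\{0,1\}^I$. Every point $x \in Z$ realizes exactly one condition $\sigma_x \in \{0,1\}^I$ defined by $\sigma_x(\ell) = \tmop{inv}(P_\ell(x))$ for $\ell \in I$. The hypothesis $\tmop{Feas}(\mathcal{P}_I, Z) \subset \Sigma$ guarantees $\sigma_x \in \Sigma$, so $Z$ is the disjoint union of the sets $\tmop{Reali}(\sigma, Z)$ as $\sigma$ ranges over $\Sigma$ (several such sets may be empty, which is harmless). For $x \in \tmop{Reali}(\sigma, Z)$ and any $I_i \subset I$, the multiplicativity observation above gives $\tmop{inv}(\mathcal{P}^{I_i}(x)) = \prod_{\ell \in I_i}\sigma(\ell) = \sigma^{I_i}$, which is exactly the value that the author highlights in the paragraph before the proposition. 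Splitting the sum over $Z$ along this partition therefore yields
\[
\tmop{Qu}(\mathcal{P}^{I_i}, Z) = \sum_{\sigma \in \Sigma} \sigma^{I_i} \cdot c(\sigma, Z),
\]
which is precisely the $i$-th entry of $\tmop{Mat}(A, \Sigma) \cdot c(\Sigma, Z)$.

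Since $i$ was arbitrary, the two vectors agree coordinatewise. The only subtlety (hardly an obstacle) is to be careful about empty cases: when $I = \emptyset$ or $A = \emptyset$ or $\Sigma = \emptyset$ the conventions $\mathcal{P}^\emptyset = 1$, $\sigma^\emptyset = 1$, and the empty-matrix convention make both sides trivially equal, so no separate argument is needed. The proof is essentially a bookkeeping computation whose engine is the multiplicativity of $\tmop{inv}$ on a field.
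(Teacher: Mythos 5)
Your proof is correct and takes essentially the same route as the paper's: both are coordinatewise linearity arguments resting on the observation (stated just before the proposition) that $\tmop{inv}(\mathcal{P}^J(x)) = \sigma^J$ for $x$ in a realization set. The paper organizes the computation by reducing to singletons $Z = \{x\}$ and summing by linearity, whereas you partition $Z$ into the realization sets $\tmop{Reali}(\sigma, Z)$ and split the sum; these are the same bookkeeping arranged slightly differently.
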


\begin{proof}{Proof:}
  The claim is easy if $Z$ has a single element.  Indeed, suppose $Z = \{ x \}$, 
and $\sigma_j$  is the zero-nonzero condition in $\Sigma$ satisfied by
  $\mathcal{P}_I$ at $x$. The coordinates of $c (\Sigma, \{ x \})$
  are $0$ except at place $j$, where it is $1$, so that $\tmop{Mat} (A,
  \Sigma) \cdot c (\Sigma, \{ x \})$ is the $j$-th column of
  $\tmop{Mat} (A, \Sigma)$. Its $i$-th coordinate is $\sigma_j^{I_i} =
  \tmop{inv} ( \mathcal{P}_I^{I_i} ( x ) ) = \tmop{Qu}
  (\mathcal{P}_I^{I_i}, \{ x \} )$. 

In the general case, the claim follows by
  linearity since $c (\Sigma, Z) = \sum_{x \in Z} c( \Sigma, \{ x
  \})$ and $\tmop{Qu} ( \mathcal{P}_I^A, Z) = \sum_{x \in Z}
  \tmop{Qu} ( \mathcal{P}_I^A, \{ x \})$.
\end{proof}

\begin{example}
When $I = \{i\} \subset \{1, \dots, s\}$, $A = [\emptyset, \{ i \}]$, and
$\Sigma = [0, 1]$, the conclusion of Proposition \ref{pro:matrix of
signs} is
$$ \left(\begin{array}{cc}
    1 & 1\\
    0 & 1
  \end{array}\right) \cdot 
  \left(\begin{array}{c}
    c (P_i = 0, Z)\\
    c (P_i \ne 0, Z)
  \end{array}\right) = \left(\begin{array}{c}
    \tmop{Qu} (1, Z)\\
    \tmop{Qu} (P_i, Z)
  \end{array}\right).
$$
\end{example}

It follows from Proposition \ref{pro:matrix of signs} that 
if $\Sigma \subset \{0,1\}^{\{1, \dots, s\}}$ 
contains
$\tmop{Feas} \left( \mathcal{P}, Z \right)$
and the matrix $\tmop{Mat} (A, \Sigma)$ is invertible, we can
compute $c (\Sigma, Z)$ from~$\tmop{Qu} ( \mathcal{P}^A, Z)$. 
Therefore, if we take $A$ as the list of the $2^s$ subsets of $\{1, \dots, s\}$ 
and $\Sigma$ as the list of the $2^s$ zero-nonzero conditions in $\{0, 1\}^{\{1, \dots, s\}}$,
since
it is easy to check that
$\tmop{Mat} (A, \Sigma)$ is invertible, we have a  naive
method for zero-nonzero determination: we solve the  
$2^s \times 2^s$ linear system from Proposition \ref{pro:matrix of signs} and 
we discard the zero-nonzero conditions $\sigma$ such that $c(\sigma, Z) = 0$. 
This naive method involve
an exponential number of invertibility queries and solving a linear system of
exponential 
size. We want an algorithm with a better complexity bound.

The key fact is to take into account that 
the number of realizable zero-nonzero conditions does not
exceed $\tmop{card} (Z)$. We are going to consider one by one the polynomials $P_1, \dots, P_s$
in the list $\mathcal{P}$ and to compute at  step $i$ the realizable zero-nonzero conditions for 
the list  $\mathcal{P}_i := P_1, \dots, P_i$, determining the nonempty
zero-nonzero conditions inductively and getting rid of the empty ones at each step. In this way, the size of
the data we manipulate is well controlled.

We need some preliminary definitions and results.

\begin{defn}\label{defn_adapted_list}
Let $I \subset \left\{ 1, \ldots, s \right\}$ and $\Sigma \subset \{0, 1\}^I$.
A set $A$ of subsets of $I$ is
\tmtextbf{adapted to zero-nonzero determination on $\Sigma$}
if the matrix of $A$ on $\Sigma$ is
invertible.
\end{defn}

Note that the set $A$ has to be ordered into a list so that the
matrix of $A$ on $\Sigma$ is unambiguous, but the choice of this ordering 
 does not change the fact that the matrix is
invertible.

\begin{example}\label{example_adapted} \begin{enumerate}
\item Consider $I = \emptyset$ and $\Sigma=[\emptyset]$, then
$ A=\{\emptyset\}$ is adapted to  zero-nonzero determination  on $\Sigma$
 since $\tmop{Mat} (A, \Sigma)= (\, 1 \, )$
 is  invertible.

\item Consider $I = \{i\} \subset \{1, \dots, s\}$, then: 
  \begin{itemize}
\item if $\Sigma = [0, 1]$, $A = \{\emptyset, \{i\}\}$ 
is adapted to zero-nonzero determination  on $\Sigma$, since
    \[ \tmop{Mat} (A, \Sigma)= \left(\begin{array}{cc}
         1 & 1\\
         0 & 1
       \end{array}\right)\]
       is invertible,
           \item if $\Sigma = [0]$ or $\Sigma = [1]$,  $A = \{\emptyset\}$  
is adapted to zero-nonzero determination on $\Sigma$,
 since $\tmop{Mat} (A, \Sigma)= (\, 1 \, )$
 is  invertible.
    \end{itemize}
\end{enumerate}
\end{example}

Let $I \subset \{ 1, \ldots, s \}$. Our aim is to describe a method for
determining for each $\Sigma \subset \{0, 1\}^I$,
a 
set
$A$ of subsets of $I$ adapted to zero-nonzero determination on
$\Sigma$.
First, we introduce some more definitions and notation.

\begin{defn}
 If $J \subset I \subset \{ 1, \ldots, s \}$, $\sigma
  \in \{ 0, 1 \}^J$ is the {\tmstrong{restriction}} of {$\tau \in
  \{ 0, 1\}^I$} if $\sigma (j) = \tau (j)$ for every $j \in J$; we also say
  that $\tau$ is an  {\tmstrong{extension}} of $\sigma$. 
If $\Sigma \subset
  \{ 0, 1 \}^I$,
  we denote by $\Sigma_J \subset \{0, 1\}^J$
the list of restrictions of elements of $\Sigma$ to $J$. 
\end{defn}

\begin{notation}
\label{addindex} 
For a 
set
$A$ of subsets of $J\subset \{1,\ldots,s\}$ and  $j\in \{1,\ldots,s\}$ 
 bigger than $\tmop{max}(J)$, we denote by 
$(A, j)$ the 
set
of subsets  of $J\cup\{j\}$
obtained by adding $j$ to all
the elements of $A$. 
\end{notation}

We are now ready to construct a 
set
of subsets adapted to sign
determination.

\begin{defn}
\tmtextbf{\tmtextup{[Adapted family]}}\label{def:adap}
  Let  $I \subset \{ 1, \ldots, s \}$ and $\Sigma \subset \{
  0, 1\}^I$.
The {\tmstrong{adapted family}} $\tmop{Ada} \left( \Sigma
\right)$ is defined by induction as follows:
\begin{itemize}
\item If $I = \emptyset$, then, if $\Sigma = \emptyset$, define $\tmop{Ada} \left( \Sigma
\right) = \emptyset$, if $\Sigma = [\emptyset]$, define $\tmop{Ada} \left( \Sigma
\right) = \{\emptyset\}$.

\item  If $I \ne \emptyset$, consider 
       $i=\max ( I )$,
$I' = I \setminus \{i \}$, $\Xi=\Sigma_{I'}$ and
    $\Xi'$ the 
   list of elements of
    $\Xi$ having two different extentions in $\Sigma$. Define
    \[ \tmop{Ada} ( \Sigma) = \tmop{Ada} (\Xi) \cup (
       \tmop{Ada} ( \Xi' ), i ). \]
     \end{itemize}
\end{defn}

From the previous definition
it is easy to prove that for  
$I \subset \{ 1, \ldots, s \}$ and $\Sigma \subset \{
  0, 1\}^I$, 
 $\tmop{card}(\tmop{Ada}(\Sigma)) = 
\tmop{card}(\Sigma)$. Before proving that the adapted family $\tmop{Ada}(\Sigma)$ 
we defined is adapted to zero-nonzero determination on $\Sigma$, 
we prove some auxiliary results.

\begin{lemma}  \label{includecompress} 
Let $I \subset \{1, \dots, s\}$ and $\Sigma_1 \subset \Sigma \subset \left\{ 0, 1
  \right\}^I$, 
    then $\tmop{Ada} (\Sigma_1) \subset \tmop{Ada} \left( \Sigma
  \right)$.
\end{lemma}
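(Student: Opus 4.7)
The plan is to prove the lemma by induction on $\tmop{card}(I)$, following the inductive structure used to define $\tmop{Ada}$.

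For the base case $I = \emptyset$, only three situations are possible under the hypothesis $\Sigma_1 \subset \Sigma$: either $\Sigma_1 = \emptyset$, in which case $\tmop{Ada}(\Sigma_1) = \emptyset$ is contained in everything; or $\Sigma_1 = [\emptyset]$, which forces $\Sigma = [\emptyset]$, and then both adapted families equal $\{\emptyset\}$.

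For the inductive step, assume $I \neq \emptyset$ and set $i = \max(I)$, $I' = I \setminus \{i\}$, $\Xi = \Sigma_{I'}$, $\Xi_1 = (\Sigma_1)_{I'}$, and let $\Xi'$ (resp.\ $\Xi_1'$) denote the sublist of $\Xi$ (resp.\ $\Xi_1$) consisting of elements having two different extensions in $\Sigma$ (resp.\ $\Sigma_1$). The crucial monotonicity observation I would establish is a pair of inclusions:
\begin{itemize}
\item $\Xi_1 \subset \Xi$, which is immediate since restriction preserves the inclusion $\Sigma_1 \subset \Sigma$;
\item $\Xi_1' \subset \Xi'$: if $\sigma \in \Xi_1'$ has two distinct extensions in $\Sigma_1$, then these two extensions lie in $\Sigma$ as well, so $\sigma$ has two distinct extensions in $\Sigma$ and belongs to $\Xi'$.
\end{itemize}

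Applying the inductive hypothesis to the pairs $(\Xi_1, \Xi)$ and $(\Xi_1', \Xi')$ (both over the smaller index set $I'$) yields $\tmop{Ada}(\Xi_1) \subset \tmop{Ada}(\Xi)$ and $\tmop{Ada}(\Xi_1') \subset \tmop{Ada}(\Xi')$. Appending $i$ to every subset in the second inclusion preserves it, so $(\tmop{Ada}(\Xi_1'), i) \subset (\tmop{Ada}(\Xi'), i)$. Taking the union and invoking the recursive formula from Definition~\ref{def:adap},
\[
\tmop{Ada}(\Sigma_1) = \tmop{Ada}(\Xi_1) \cup (\tmop{Ada}(\Xi_1'), i) \subset \tmop{Ada}(\Xi) \cup (\tmop{Ada}(\Xi'), i) = \tmop{Ada}(\Sigma),
\]
which completes the induction.

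There is no real obstacle here; the argument is a routine structural induction mirroring the recursion that defines $\tmop{Ada}$. The only subtle point that must be verified carefully is the monotonicity of the operation picking out restrictions with two extensions (i.e.\ that $\Xi_1' \subset \Xi'$), and this follows directly from the definition.
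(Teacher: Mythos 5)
Your proof is correct and takes essentially the same approach as the paper's: induction on $\tmop{card}(I)$, observing $\Xi_1 \subset \Xi$ and $\Xi_1' \subset \Xi'$, then applying the inductive hypothesis twice. You merely spell out the details (base case, appending $i$, taking unions) that the paper leaves implicit.
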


\begin{proof}{Proof:} We prove the claim by induction on  $\tmop{card}(I)$.
If $I = \emptyset$, the claim is true. 
Suppose now $I \ne \emptyset$.
Following the notation in 
  Definition \ref{def:adap}, and noting that $\Xi_1=(\Sigma_1)_{I'}
  \subset \Xi$ and $\Xi'_1 \subset$ $\Xi'$, 
  where
     $\Xi'_1 \subset \{ 0,1\}^{I'}$ is the list of elements of
    $\Xi_1$ having two different extentions in $\Sigma_1$,  the claim follows using twice
  the induction hypothesis.
\end{proof}

The following result will be useful for the complexity analysis.

\begin{proposition}
  \label{pro:bkr} Let $I \subset \{1, \dots, s\}$ and $\Sigma \subset \{0,1\}^I$. For every $J \in \tmop{Ada} (\Sigma)$,  $\tmop{card} \left(
  J \right) < \tmop{bit} (\tmop{card} \left( \Sigma \right))$.
\end{proposition}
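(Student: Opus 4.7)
{Proof proposal:}
My plan is to prove the statement by induction on $\tmop{card}(I)$, following the recursive structure of Definition \ref{def:adap}. The key observation that drives the inductive step is that each element of $\Xi'$ contributes two distinct extensions to $\Sigma$, which forces $\tmop{card}(\Sigma) \geq 2\tmop{card}(\Xi')$; this doubling is exactly what is needed to absorb the $+1$ coming from adjoining $i$ to elements of $\tmop{Ada}(\Xi')$.

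For the base case $I = \emptyset$, either $\Sigma = \emptyset$ and $\tmop{Ada}(\Sigma) = \emptyset$ so the claim holds vacuously, or $\Sigma = [\emptyset]$ and $\tmop{Ada}(\Sigma) = \{\emptyset\}$, in which case the only $J$ is empty with $\tmop{card}(J) = 0 < 1 = \tmop{bit}(1)$.

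For the inductive step $I \neq \emptyset$, I adopt the notation of Definition \ref{def:adap} and split $\tmop{Ada}(\Sigma) = \tmop{Ada}(\Xi) \cup (\tmop{Ada}(\Xi'), i)$ into two cases. If $J \in \tmop{Ada}(\Xi)$, the induction hypothesis combined with the trivial bound $\tmop{card}(\Xi) \leq \tmop{card}(\Sigma)$ (and monotonicity of $\tmop{bit}$) gives $\tmop{card}(J) < \tmop{bit}(\tmop{card}(\Xi)) \leq \tmop{bit}(\tmop{card}(\Sigma))$. If instead $J = J' \cup \{i\}$ with $J' \in \tmop{Ada}(\Xi')$ (which forces $\Xi' \neq \emptyset$), the induction hypothesis yields $\tmop{card}(J') < \tmop{bit}(\tmop{card}(\Xi'))$, hence $\tmop{card}(J) \leq \tmop{bit}(\tmop{card}(\Xi'))$.

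The main point—and what I expect to be the only mildly delicate step—is then to bound $\tmop{bit}(\tmop{card}(\Xi')) \leq \tmop{bit}(\tmop{card}(\Sigma)) - 1$. This follows from $\tmop{card}(\Sigma) \geq 2\tmop{card}(\Xi')$ together with the standard identity $\tmop{bit}(2n) = \tmop{bit}(n) + 1$ for $n \geq 1$. Combining gives $\tmop{card}(J) < \tmop{bit}(\tmop{card}(\Sigma))$, completing the induction. The whole argument is really just bookkeeping on the recursion, with the counting inequality $\tmop{card}(\Sigma) \geq 2\tmop{card}(\Xi')$ being the single conceptual ingredient.
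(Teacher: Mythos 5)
Your proof is correct, but it takes a different route from the paper's. You prove the cardinality bound directly by induction on $\tmop{card}(I)$, with the key counting inequality $\tmop{card}(\Sigma) \ge 2\,\tmop{card}(\Xi')$ (each element of $\Xi'$ has two distinct extensions in $\Sigma$) absorbing the $+1$ that comes from adjoining $i$ to an element of $\tmop{Ada}(\Xi')$. The paper instead proves by induction the structural statement that $\tmop{Ada}(\Sigma)$ is closed under taking subsets: if $J \in \tmop{Ada}(\Sigma)$ then every $J'' \subset J$ also lies in $\tmop{Ada}(\Sigma)$ (this uses Lemma~\ref{includecompress} to pass from $\tmop{Ada}(\Xi')$ to $\tmop{Ada}(\Xi)$). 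The proposition then drops out immediately by counting: $2^{\tmop{card}(J)} \le \tmop{card}(\tmop{Ada}(\Sigma)) = \tmop{card}(\Sigma)$. Your argument is arguably more elementary, requiring only the identity $\tmop{bit}(2n) = \tmop{bit}(n)+1$ and monotonicity of $\tmop{bit}$; the paper's argument isolates a reusable structural fact (downward closedness of the adapted family) at the price of invoking an extra lemma. Both are sound, with essentially the same amount of work.
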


\begin{proof}{Proof: } We will prove by induction on $\tmop{card}(I)$ that for every $J \in 
\tmop{Ada}(\Sigma)$  all the subsets
of $J$ belong to $\tmop{Ada}(\Sigma)$. The proposition follows since 
$2^{\tmop{card}(J)} 
\le \tmop{card}(\tmop{Ada}(\Sigma)) = \tmop{card}(\Sigma)$. 
If $I = \emptyset$, the claim is true. Suppose now $I \ne \emptyset$. 
Taking $i={\rm max}(I)$,
we consider two cases: 
if $i \not \in J$ then
$J \in \tmop{Ada}(\Xi)$, 
if $i \in J$, then $J \setminus \{i\} \in \tmop{Ada}(\Xi')$.
Therefore, using twice 
the induction hypothesis, all the subsets of $J$
not containing $i$ belong to $\tmop{Ada}(\Xi) \subset \tmop{Ada}(\Sigma)$ and
all the subsets of $J$ containing $i$ belong to 
$(\tmop{Ada}(\Xi'),i) \subset \tmop{Ada}(\Sigma)$. 
\end{proof}

\begin{proposition}
  \label{prop:adaptedadapted} Let  $I \subset \left\{ 1, \ldots, s \right\}$
  and $\Sigma \subset \left\{ 1, 0 \right\}^I$.
  The set $\tmop{Ada} ( \Sigma)$ is adapted to zero-nonzero
  determination on $\Sigma$.
\end{proposition}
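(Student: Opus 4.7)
The plan is to prove that $M := \tmop{Mat}(\tmop{Ada}(\Sigma), \Sigma)$ is invertible by induction on $\tmop{card}(I)$, following the recursive structure of Definition \ref{def:adap}. The base case $I = \emptyset$ is immediate: either $\Sigma = \emptyset$ (so $M$ is the empty matrix, invertible by convention), or $\Sigma = [\emptyset]$ (so $M = (1)$).

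For the inductive step, let $i = \max(I)$, $I' = I \setminus \{i\}$, $\Xi = \Sigma_{I'}$, and $\Xi'$ as in Definition \ref{def:adap}. Using the identity $\tmop{card}(\tmop{Ada}(\Sigma)) = \tmop{card}(\Sigma)$ mentioned in the excerpt, together with $\tmop{card}(\Sigma) = \tmop{card}(\Xi) + \tmop{card}(\Xi')$, the matrix $M$ is square of size $\tmop{card}(\Xi) + \tmop{card}(\Xi')$. I would split the rows according to $\tmop{Ada}(\Sigma) = \tmop{Ada}(\Xi) \cup (\tmop{Ada}(\Xi'), i)$, and record the key formula for entries: for $\sigma \in \Sigma$ with restriction $\xi = \sigma|_{I'}$, the entry $\sigma^J$ equals $\xi^J$ when $i \notin J$, and equals $\sigma(i) \cdot \xi^{J'}$ when $J = J' \cup \{i\}$.

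The main step is to introduce a convenient ordering of columns plus a column operation that exposes a block-triangular structure. Order the columns as follows: a first group of $\tmop{card}(\Xi)$ columns containing, for each $\xi \in \Xi$, a chosen extension in $\Sigma$ (the one with $\sigma(i) = 0$ whenever $\xi \in \Xi'$, and the unique extension otherwise); followed by a second group of $\tmop{card}(\Xi')$ columns containing, for each $\xi \in \Xi'$, the extension with $\sigma(i) = 1$. Then, for every $\xi \in \Xi'$, subtract the first-group column from the paired second-group column. This does not change the determinant. In the new matrix, the top-right block (rows with $i \notin J$, new second-group columns) vanishes because $\xi^J - \xi^J = 0$; the top-left block becomes exactly $\tmop{Mat}(\tmop{Ada}(\Xi), \Xi)$ since $\xi^J$ does not depend on $\sigma(i)$; and the bottom-right block becomes $\tmop{Mat}(\tmop{Ada}(\Xi'), \Xi')$ since $\xi^{J'} \cdot (1 - 0) = \xi^{J'}$ for $\xi \in \Xi'$ and $J' \in \tmop{Ada}(\Xi')$.

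Hence $\det(M) = \det(\tmop{Mat}(\tmop{Ada}(\Xi), \Xi)) \cdot \det(\tmop{Mat}(\tmop{Ada}(\Xi'), \Xi'))$, and both factors are nonzero by the inductive hypothesis applied to $\Xi \subset \{0,1\}^{I'}$ and $\Xi' \subset \{0,1\}^{I'}$. I expect the main obstacle to be purely notational, namely verifying carefully that the column operation described really produces the stated block-triangular shape; the bottom-left block need not vanish, but this is irrelevant for computing the determinant of a block upper-triangular matrix, so no further argument is needed there.
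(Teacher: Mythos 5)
Your proof is correct and follows essentially the same decomposition as the paper: identical induction on $\tmop{card}(I)$, identical partition of columns into the ``chosen extension'' group (the paper's $\Sigma_0 \cup \Sigma_\star$) and the remaining group (the paper's $\Sigma_1$), and the same row split according to $\tmop{Ada}(\Sigma) = \tmop{Ada}(\Xi) \cup (\tmop{Ada}(\Xi'),i)$. The only real difference is how you finish: you perform a column subtraction (equivalently, right-multiply by a unipotent block matrix) and then read off the determinant from the resulting block-triangular shape, whereas the paper performs a Gaussian elimination by blocks and writes down the inverse of $\tmop{Mat}(\tmop{Ada}(\Sigma),\Sigma)$ explicitly. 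Your route is slightly more economical for proving the proposition alone; the paper's extra effort is not wasted, though, since the explicit formula for the inverse (built from $M_{1,1}^{-1}$, $M_{2,2}^{-1}$, $M_{2,1}$ and a selection matrix $\tilde{\tmop{Id}}$) is precisely what drives Algorithm \texttt{Linear Solving} and its complexity bound in Proposition \ref{pro:adaptedadaptedquick}. One small slip of terminology: after your column operation the \emph{top-right} block vanishes and the bottom-left block survives, so the matrix is block \emph{lower}-triangular, not upper-triangular; this has no bearing on the conclusion since the determinant of any block-triangular matrix (with square diagonal blocks) is the product of the determinants of its diagonal blocks.
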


We adapt the proof in \cite[Proposition 6]{Per}, which provides information about
the inverse of $\tmop{Mat} ( \tmop{Ada} ( \Sigma), \Sigma)$ 
useful later for algorithmic matters.

\begin{proof}{Proof:} First, we define the total
order $\prec$ on the subsets of $\{1, \dots, s\}$, which we use whenever
we have to order a set of subsets into a list. 
Given $J \subset \{1, \dots, s\}$, we associate 
the  natural number $\vert J\vert=\sum_{j\in J} 2^{j-1}$ and then we define for $J_1, J_2 
\subset \{1, \dots, s\}$, 
$J_1 \prec J_2$ if $\vert J_1\vert < \vert J_2\vert$.
Note that $\prec$ extends the partial order of inclusion of subsets.

 We will prove by induction on $\tmop{card}(I)$ that
    $\tmop{Mat} ( \tmop{Ada} ( \Sigma), \Sigma)$
    is invertible. If $I = \emptyset$, then the claim is true.
Suppose now 
$I \ne \emptyset$. We follow the notation in Definition \ref{def:adap}
and 
divide $\Sigma$ in three (possibly empty) sublists:
  \begin{itemize}
    \item $\Sigma_0$ consisting of the elements $\sigma$ of
    $\Sigma$
    such that $\sigma( i ) = 0$ and the restriction of $\sigma$ to
    $I'$ is in $\Xi'$,
        \item $\Sigma_1$ consisting of the elements $\sigma$ of
    $\Sigma$ such that $\sigma( i ) = 1$ and the restriction of
    $\sigma$ to $I'$ is in $\Xi'$,
    
    \item $\Sigma_\star$ consisting of the elements of $\Sigma$ whose restriction to
    $I'$ is in  $\Xi \setminus \Xi'$.
    
\end{itemize}

If $\Xi' = \emptyset$ is empty, 
$\tmop{Mat} ( \tmop{Ada} (\Sigma), \Sigma )=\tmop{Mat} ( \tmop{Ada} (\Xi), \Xi)$ 
is invertible by induction hypothesis.

If $\Xi' \ne \emptyset$, we reorder columns in $\tmop{Mat} \left( \tmop{Ada} (\Sigma), \Sigma \right)$
so that the columns corresponding to zero-nonzero conditions in 
$\Sigma_{0} \cup \Sigma_\star$ appear first. 
Then, $\tmop{Mat} \left( \tmop{Ada}
  (\Sigma), \Sigma \right)$ gets the following 
  structure
\begin{equation}\label{matrixMat_permutated} 
M=
     \left(\begin{array}{c|c}
      \begin{array}{ccc}
& & \cr & M_{1,1} & \cr & & \end {array} & M_{1,2} \\ \hline
 \begin{array}{ccc}
& & \cr & M_{2,1}  & \cr & & 
 \end {array}  & M_{2,2}
\end{array}\right) 
\end{equation}
  with
  \begin{eqnarray*}
   M_{1, 1} & = &  \tmop{Mat}( \tmop{Ada}(
    \Xi ), \Xi ),\\
    M_{1, 2} & = & \tmop{Mat}( \tmop{Ada}(
    \Xi ), \Xi' ), \\
M_{2, 1} & = & \tmop{Mat} ( ( \tmop{Ada} ( \Xi' ), i), \Sigma_{0} \cup \Sigma_\star), \\    
M_{2, 2} & = & \tmop{Mat} ( \tmop{Ada} ( \Xi' ), \Xi') .
  \end{eqnarray*}

It is easy to see that $M_{1,2}$ equals the submatrix of $M_{1,1}$ composed by the columns 
corresponding to zero-nonzero conditions in $\Sigma_{0}$ and also that all the columns
in $M_{2,1}$ corresponding to zero-nonzero conditions in $\Sigma_{0}$ are $0$. 

  Suppose, by induction hypothesis, that $\tmop{Mat} ( \tmop{Ada} (
  \Xi), \Xi)$ and $\tmop{Mat} (
  \tmop{Ada} (\Xi' ), \Xi' )$ are
  invertible. We invert $\tmop{Mat} ( \tmop{Ada} (\Sigma), \Sigma
  )$ using a Gaussian elimination method by block, i.e. multiplying to
  the left by block elementary matrices.  
We call $\tilde{\tmop{Id}}$ the submatrix of the identity matrix 
with columns indexed by the list $\Sigma_0 \cup \Sigma_\star$
composed by the columns
corresponding to zero-nonzero conditions in $\Sigma_0$. Taking into account that
$M_{2,1} \cdot \tilde{\tmop{Id}} = 0$, it is easy to check that
$$
     \left(\begin{array}{c|c}
      \begin{array}{ccc}
& & \cr & \tmop{Id} & \cr & & \end {array} & -\tilde{\tmop{Id}} \\ \hline
   \begin{array}{ccc}
& & \cr & 0 & \cr & & \end {array}  & \tmop{Id}
\end{array}\right)
     \left(\begin{array}{c|c}
      \begin{array}{ccc}
& & \cr & \tmop{Id} & \cr & & \end {array} & 0 \\ \hline
    \begin{array}{ccc}
& & \cr & 0 & \cr & & \end {array}   & M_{2,2}^{-1}
\end{array}\right)
     \left(\begin{array}{c|c}
      \begin{array}{ccc}
& & \cr & \tmop{Id} & \cr & & \end {array} & 0 \\ \hline
   \begin{array}{ccc}
& & \cr & -M_{2,1} & \cr & & \end {array} 
   & \tmop{Id}
\end{array}\right)
     \left(\begin{array}{c|c}
      \begin{array}{ccc}
& & \cr & M_{1,1}^{-1} & \cr & & \end {array} & 0 \\ \hline
  \begin{array}{ccc}
& & \cr & 0 & \cr & & \end {array}    & \tmop{Id}
\end{array}\right)
$$
is the inverse of the matrix $M$ defined in (\ref{matrixMat_permutated}). Therefore
 $\tmop{Mat} ( \tmop{Ada} (
    \Sigma), \Sigma )$ is invertible as we wanted to prove.
\end{proof}

A last key observation which leads to a well controlled size of the data we manipulate is 
the following.
Let $\mathcal{P}_i=P_1,\ldots,P_i$ and suppose that we compute
inductively 
$\tmop{Feas}( \mathcal{P}_i, Z)$ for $i = 0, \dots, s$. Since 
$1 = \tmop{card}(\tmop{Feas}( \mathcal{P}_0, Z))$ and 
$\tmop{card}(\tmop{Feas}( \mathcal{P}_s, Z)) = \tmop{card}(\tmop{Feas}( \mathcal{P}, Z)) \le \tmop{card}(Z)$, 
in the sequence
$$
\tmop{card}(\tmop{Feas}( \mathcal{P}_0, Z)) \le \dots \le
\tmop{card}(\tmop{Feas}( \mathcal{P}_i, Z)) \le \dots 
\le \tmop{card}(\tmop{Feas}( \mathcal{P}_s, Z))
$$
we 
have at most $\tmop{card}(Z)-1$ places where 
a
strict inequality holds. 
For $i = 1, \dots, s$ such that 
$$\tmop{card}(\tmop{Feas}( \mathcal{P}_{i-1}, Z)) = 
\tmop{card}(\tmop{Feas}( \mathcal{P}_{i}, Z)),$$
we have that every zero-nonzero condition in 
$\tmop{Feas}( \mathcal{P}_{i-1}, Z)$ can be extended to a zero-nonzero condition 
in 
$\tmop{Feas}( \mathcal{P}_{i}, Z)$ in only one way. 
Once this information is known, 
we have that 
for each point in $Z$,
the invertibility of $P_i$ is determined from 
the invertibility of the polynomials in $\mathcal{P}_{i-1}$ at this point. 
With this remark in mind,
we introduce the following definitions
which will be useful in
Algorithm {\rm \texttt{Zero-nonzero Determination}}
and its complexity analysis.

\begin{defn}
\tmtextbf{\tmtextup{[Compressed set of indices]}} \label{def:comp} 
  Let $I \subset \{1, \dots, s\}$ and  $\Sigma \subset \left\{ 1, 0 \right\}^I$,
  $\Sigma \neq \emptyset$.
The {\tmstrong{compressed set of indices}} $\tmop{comp} \left( \Sigma
\right)$ is defined by induction as follows:
\begin{itemize}
\item If $I = \emptyset$ define 
 $\tmop{comp}(\Sigma) =  \emptyset$.

\item If $I \ne \emptyset$, consider $i= \max ( I)$,  $I' = I \setminus \{i\}$ and $\Xi=\Sigma_{I'}$; then 
\begin{itemize}
 \item if $\tmop{card} ( \Sigma) = \tmop{card} ( \Xi
)$, define $\tmop{comp} ( \Sigma ) = \tmop{comp} (
\Xi )$,
\item if $\tmop{card} ( \Sigma) > \tmop{card} ( \Xi
)$, define $\tmop{comp}( \Sigma ) = \tmop{comp} (
\Xi ) \cup \{i\}$.
\end{itemize}
\end{itemize}
We define also the {\tmstrong{compressed list of zero-nonzero conditions}}, 
$\tmop{Comp}(\Sigma )=\Sigma_{\tmop{comp}( \Sigma )}$.
\end{defn}

\begin{example}
\label{examplecomp}
  
 If $I = \{ 1, 2, 3, 4, 5 \}$ and $\Sigma$ is the list
  of zero-nonzero conditions 
\[ 
[1\,0\,1\,1\,0, \ 1\,0\,1\,1\,1, \ 1\,1\,0\,1\,1, \ 1\,1\,1\,0\,0, \ 1\,1\,1\,0\,1]
\]
then $\tmop{comp} \left( \Sigma \right)$ is $\{2, 3, 5\}$
and $\tmop{Comp}( \Sigma )$
is 
\[ 
[0\,1\,0, \ 0\,1\,1, \ 1\,0\,1, \ 1\,1\,0,\ 1\,1\,1].
\]

\end{example}

\begin{remark}\label{rem:comp}
Let $I \subset \{ 1, \ldots, s \}$ and $\Sigma \subset \{
  0, 1\}^I$.  
Following Definition \ref{def:adap}
and  Definition \ref{def:comp}, it is easy to prove by induction in $\tmop{card}(I)$
that 
 $\tmop{Ada} \left(
    \Sigma \right)=\tmop{Ada} \left(
    \tmop{Comp} \left(\Sigma\right) \right)$.
\end{remark}

\subsection{Algorithms and complexity}

Given $I \subset \{1, \dots, s\}$ and $\Sigma \subset \{0, 1\}^I$, 
we 
represent $\Sigma$ with a $\tmop{card}(\Sigma) \times \tmop{card}(I)$ matrix filled with $0$ and $1$, 
each row representing a zero-nonzero condition in $\{0, 1\}^I$. 
We consider this representation even when $\Sigma = \emptyset$ or $I = \emptyset$. 
Similarly, we represent a list $A$ of subsets of $I$ with  a 
$\tmop{card}(A) \times \tmop{card}(I)$ matrix filled with $0$ and $1$, 
each row representing an element of $A$, and the bit $0$ (resp.$1$) in each column 
indicating that the corresponding
element does not belong (resp. belongs) to the subset of $I$. 
When we speak
of $\Sigma$ (resp. $A$)
we mean either the list $\Sigma$ (resp. $A$) or its representation as a matrix
as convenient.

For a matrix $M$ of size $m \times n$, given ordered lists of integers with no repetitions
$\ell$ and $\ell'$, with
the entries of $\ell$ (resp. $\ell'$) in $\{1, \dots, m\}$ (resp. 
$\{1, \dots, n\}$), we denote by $M(\ell, \ell')$ the submatrix
of $M$ obtained by extracting from $M$ the rows in $\ell$ and the columns in $\ell'$. 
We use this notation even when one of the lists $\ell$ and $\ell'$ is empty.
For a vector $v$ of size $m$, analoguously we denote by 
$v(\ell)$  the subvector formed by the entries with index in $\ell$.

Up to the end of the subsection, we follow the notation in 
Definition \ref{def:adap} and
Proposition \ref{prop:adaptedadapted}. 
We introduce an auxiliary definition. 

\begin{defn}\label{defn_Info} Let $I \subset \{1, \dots, s\}$ and $\Sigma \subset \{0, 1\}^I$. The matrix 
$\tmop{Info}(\Sigma)$ is defined by induction as follows:
\begin{itemize}
\item If $I = \emptyset$ define $\tmop{Info} \left( \Sigma
\right)$ as the matrix with as many rows as elements in $\Sigma$ (possibly $0$ or $1$) and $0$ columns.  

\item  If $I \ne \emptyset$, we consider 
the list
 $\ell_0$ (resp. $\ell_1$, $\ell_\star$)
formed by the indices of zero-nonzero conditions in $\Sigma$
which belong to $\Sigma_0$ (resp.  $\Sigma_1$, $\Sigma_\star$) and define  
$\tmop{Info}(\Sigma)$ as follows: 
$$
\begin{cases}
\tmop{Info}(\Sigma)(\ell_0 \cup \ell_\star,  [1, \dots, \tmop{card}(I) - 1]) = 
\tmop{Info}(\Xi), \\
\tmop{Info}(\Sigma)(\ell_1,  [1, \dots, \tmop{card}(I) - 1]) = 
\tmop{Info}(\Xi'), \\
\tmop{Info}(\Sigma)(j, \tmop{card}(I)) = 0 & \hbox{ if } j \in \ell_0, \\
\tmop{Info}(\Sigma)(j, \tmop{card}(I)) = 1 & \hbox{ if } j \in \ell_1, \\
\tmop{Info}(\Sigma)(j, \tmop{card}(I)) = \star & \hbox{ if } j \in \ell_\star. \\
\end{cases}
$$
\end{itemize}
\end{defn}

\begin{example}
 \label{exampleinfo}
Continuing Example   \ref{examplecomp}, 
$\tmop{Info}(\Sigma)$
is
$$\left(\begin{array}{ccccc}
 \star & 0 & \star & \star & 0 \cr
 \star&0&\star&\star&1 \cr 
\star&1&0&\star&\star \cr 
\star&\star&1&\star&0 \cr \star&1&\star&\star&1 
  \end{array}\right).
$$
\end{example}

The availability of the information provided by the matrix $\tmop{Info}(\Sigma)$ is very important
to obtain the complexity bound in the  algorithms in this subsection. 
We consider then the following auxiliary technical algorithm.

\medskip

\textbf{Algorithm} \texttt{Get Info}

\begin{itemize}
\item \textbf{Input:} A list $\Sigma \subset \{0,1 \}^I$ with $I \subset \{1, \dots, s\}$.

\item \textbf{Output:}  The matrix $\tmop{Info}(\Sigma)$. 
\end{itemize}

It is easy to 
give a procedure for 
Algorithm \texttt{Get Info} with bit complexity 
 $O(\tmop{card}(\Sigma)\tmop{card}(I)^2)$. 
Since the list $\Sigma$ is ordered,  this procedure takes $O(\tmop{card}(\Sigma)\tmop{card}(I))$ bit operations to compute
the lists $\ell_0, \ell_1$ and $\ell_\star$ and then does recursive calls to itself to compute 
the matrices $\tmop{Info}(\Xi)$
and $\tmop{Info}(\Xi')$.

The following algorithm computes the adapted family for a given list of zero-nonzero conditions. 

\medskip

{\noindent}\tmtextbf{Algorithm} \texttt{Adapted Family}
\begin{itemize}
\item \tmtextbf{Input:} 
A list $\Sigma \subset \{0,1 \}^I$ with $I \subset \{1, \dots, s\}$ and 
the matrix $\tmop{Info}(\Sigma)$. 
  
\item \tmtextbf{Output:} The set $\tmop{Ada} (\Sigma)$ as a list.
  
\item \tmtextbf{Procedure:} Extract from each row of  $\tmop{Info}(\Sigma)$ the subset of indices with entry $1$.
\end{itemize}

\begin{lemma}
\label{lem:adapted} Given a list $\Sigma \subset \left\{ 1, 0 \right\}^I$ with $I \subset \{1, \dots, s\}$ 
and the matrix  $\tmop{Info}(\Sigma)$,
Algorithm 
\emph{\texttt{Adapted Family}} computes the set $\tmop{Ada} (\Sigma)$ as a list. The bit complexity of this 
algorithm is 
 $O( \tmop{card} ( \Sigma )  \tmop{card}( I))$.
\end{lemma}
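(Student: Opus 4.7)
The plan is to establish both correctness and the complexity bound separately; the main work is correctness, and it will proceed by induction on $\tmop{card}(I)$ following exactly the recursive shape of Definitions \ref{def:adap} and \ref{defn_Info}.

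For correctness, I would prove by induction on $\tmop{card}(I)$ that the set of subsets of $I$ obtained by reading off, for each row of $\tmop{Info}(\Sigma)$, the column indices where the entry is $1$, coincides with $\tmop{Ada}(\Sigma)$. The base case $I=\emptyset$ is immediate: $\tmop{Info}(\Sigma)$ has $0$ columns, and the only possible extracted subset is $\emptyset$, matching $\tmop{Ada}(\Sigma)\in\{\emptyset,\{\emptyset\}\}$ for the two possible values of $\Sigma$. For the inductive step with $i=\max(I)$, I would use the recursive description of $\tmop{Info}(\Sigma)$: rows indexed by $\ell_0\cup\ell_\star$ are in natural bijection with $\Xi$ (and their first $\tmop{card}(I)-1$ columns form $\tmop{Info}(\Xi)$), while rows indexed by $\ell_1$ are in natural bijection with $\Xi'$ (and their first $\tmop{card}(I)-1$ columns form $\tmop{Info}(\Xi')$). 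The last column contributes $i$ to the extracted subset exactly on rows in $\ell_1$. Applying the induction hypothesis separately to $\Xi$ (for rows in $\ell_0\cup\ell_\star$) and to $\Xi'$ (for rows in $\ell_1$) shows that the total list of extracted subsets is $\tmop{Ada}(\Xi)\cup(\tmop{Ada}(\Xi'),i)$, which by Definition \ref{def:adap} is $\tmop{Ada}(\Sigma)$.

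Two small points will need attention while carrying out the induction. First, the output must be a list (i.e.\ without repetitions), so I need to observe that the union $\tmop{Ada}(\Xi)\cup(\tmop{Ada}(\Xi'),i)$ is automatically disjoint, because elements of the second piece contain $i$ while elements of the first do not; and within each piece the induction hypothesis already guarantees no repetitions. Second, a sanity check on row counts: the number of rows of $\tmop{Info}(\Sigma)$ is $\tmop{card}(\Sigma)$, which equals $\tmop{card}(\Xi)+\tmop{card}(\Xi')=\tmop{card}(\ell_0\cup\ell_\star)+\tmop{card}(\ell_1)$, matching $\tmop{card}(\tmop{Ada}(\Xi))+\tmop{card}(\tmop{Ada}(\Xi'))=\tmop{card}(\tmop{Ada}(\Sigma))$ by the cardinality remark following Definition \ref{def:adap}.

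The complexity estimate is straightforward: once $\tmop{Info}(\Sigma)$ is given, the procedure visits each of its $\tmop{card}(\Sigma)\cdot\tmop{card}(I)$ entries exactly once, performing a constant number of bit operations per entry (testing whether the entry is $1$, and if so writing the column index into the output row), so the total bit complexity is $O(\tmop{card}(\Sigma)\tmop{card}(I))$. The only thing that could look like a genuine obstacle is ensuring that the bijection between rows of $\tmop{Info}(\Sigma)$ and elements of the recursively defined $\tmop{Ada}(\Sigma)$ preserves the chosen ordering throughout the induction; this will follow because the lists $\ell_0,\ell_1,\ell_\star$ are defined by extracting indices in increasing order, which is compatible both with the lexicographic order on $\Sigma$ and with the order $\prec$ on subsets used in Proposition \ref{prop:adaptedadapted}.
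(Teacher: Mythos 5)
Your proof is correct and takes the same approach the paper leaves implicit: the paper simply asserts that correctness "follows from Definition \ref{def:adap} and Definition \ref{defn_Info}" and that the complexity bound "is clear since $\tmop{Info}(\Sigma)$ is read once," and your induction on $\tmop{card}(I)$ following the shared recursive structure of the two definitions supplies exactly the details those assertions rely on. The remarks on disjointness and row counts are sound; the worry about list ordering is harmless, since the paper subsequently \emph{defines} the list order of $\tmop{Ada}(\Sigma)$ to be the one produced by Algorithm \texttt{Adapted Family}, so no consistency issue arises for this lemma.
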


\begin{proof}{Proof:}
The correctness of the algorithm follows from Definition \ref{def:adap} and 
Definition \ref{defn_Info}. 
The bound on the bit complexity is clear since  $\tmop{Info}(\Sigma)$ is read once. 
\end{proof}

\begin{example}
 \label{exampleadap}
 Continuing Example  \ref{exampleinfo},
the representation of $\tmop{Ada} (\Sigma)$ 
as a list obtained by Algorithm \emph{\texttt{Adapted Family}} is 
$$\left(\begin{array}{ccccc}
 0 & 0 & 0 & 0 & 0 \cr
 0&0&0&0&1 \cr 
0&1&0&0&0 \cr 
0&0&1&0&0 \cr 0&1&0&0&1 
  \end{array}\right).
$$
Finally 
$\tmop{Ada} (\Sigma) =\{\emptyset,\{5\}, \{2\}, \{3\}, \{2,5\}\}$.

\end{example}

From now on, for every $I \subset \{1, \dots, s\}$ and $\Sigma \subset \{0, 1\}^I$, we
considered the set $\tmop{Ada}(\Sigma)$ ordered into a list as obtained by Algorithm
\texttt{Adapted Family}. 

We give now a specific method for solving the linear systems arising in the 
algorithm for 
zero-nonzero determination.

\medskip

{\noindent}\tmtextbf{Algorithm  } \texttt{Linear Solving}

\begin{itemize}
\item \tmtextbf{Input:}  
A list $
\Sigma \subset \{0, 1\}^I$ with $I \subset \{1, \dots, s\}$, 
the matrices $\tmop{Info}(\Sigma)$ and $\tmop{Mat}(\tmop{Ada}(\Sigma), \Sigma)$ 
and an integer 
vector $v$ of size
$\tmop{card}(\Sigma)$. 
  
\item \tmtextbf{Output:} The 
vector $c = \tmop{Mat}(\tmop{Ada}(\Sigma), \Sigma)^{-1}v$. 
 
\item \tmtextbf{Procedure:}

\begin{enumerate}

\item If $I = \emptyset $ output $c = v$. 
So from now we suppose $I \ne \emptyset$. 

\item \label{ls:step_2} Extract from $\tmop{Info}(\Sigma)$ 
the lists 
$\ell_0, \ell_1$ and $\ell_\star$ (cf. Definition \ref{defn_Info}),
define $\ell=\ell_0 \cup \ell_\star$, 
$m^0 = \tmop{card}(\ell_0)$ and  $m^\star = \tmop{card}(\ell_\star)$.

\item \label{ls:step_3} Compute  $t(\ell)
= 
\tmop{Mat}(\tmop{Ada} (\Xi), \Xi)^{-1}
v(\ell)$,  
doing a recursive call to Algorithm \texttt{Linear Solving}. 
  
\item \label{ls:step_4} If $m^0 \ne 0$ and $m^\star \ne 0$, 
compute
$t(\ell_1) 
= 
-\tmop{Mat}( (\tmop{Ada}(\Xi'), i), \Sigma_\star)t(\ell_\star) + 
v(\ell_1)$.

\item \label{ls:step_5} If $m^0 \ne 0$: 
\begin{enumerate}
\item \label{ls:step_5:a} Compute $c(\ell_1)
= 
\tmop{Mat}(\tmop{Ada} (\Xi'), \Xi')^{-1}
t(\ell_1)$,  
doing a recursive call to Algorithm \texttt{Linear Solving}. 
  
\item \label{ls:step_5:b} Compute $c
(\ell_0) =  
t(\ell_0)-t(\ell_1)$.

\item \label{ls:step_5:c} Define $c(\ell_\star) = t(\ell_\star)$. 

\end{enumerate}

\item \label{ls:step_6} Output $c$. 

\end{enumerate}

\end{itemize}

\begin{remark} Let  $I \subset \left\{ 1, \ldots, s \right\}$
  and $\Sigma \subset \left\{ 1, 0 \right\}^I$. Following the steps of the algorithm, it is easy to prove by induction in 
$\tmop{card}(I)$ that if $v$ is an integer vector of size $\tmop{card}(\Sigma)$, then 
 $c=\tmop{Mat}(\tmop{Ada} ( \Sigma), \Sigma)^{-1}v$ is also an integer vector. 
\end{remark}

\begin{proposition}\label{pro:adaptedadaptedquick}
Given a  list $\Sigma \subset \{0, 1\}^I$ with $I \subset \{1, \dots, s\}$, 
the matrices $\tmop{Info}(\Sigma)$ and $\tmop{Mat}(\tmop{Ada}(\Sigma), \Sigma)$ 
and an integer vector $v$ of size
$\tmop{card}(\Sigma)$, Algorithm \emph{\texttt{Linear Solving}} computes the 
vector
$$c = \tmop{Mat}(\tmop{Ada}(\Sigma), \Sigma)^{-1}v.$$

If $Z \subset \mathup{C}^k$  is a finite set with $r$ elements, 
$\mathcal{Q}$ is a finite set of polynomials indexed by $I$ and
$\tmop{Feas}(\mathcal{Q} , Z ) \subset \Sigma$ then
$v = \tmop{Qu}( \mathcal{Q}^{\tmop{Ada} ( \Sigma) }, Z )$ implies 
$c = c(\Sigma, Z)$ and the bit complexity of Algorithm \emph{\texttt{Linear Solving}}  is 
$O(\tmop{card}(\Sigma)\tmop{card}(I) + \tmop{card} (\Sigma)^2 \tmop{bit}(r))$. 
\end{proposition}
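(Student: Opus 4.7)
My plan is to prove the three parts of the statement—that \texttt{Linear Solving} returns $c = M^{-1}v$ for $M = \tmop{Mat}(\tmop{Ada}(\Sigma), \Sigma)$, that $c = c(\Sigma, Z)$ when $v$ comes from invertibility-queries, and the bit-complexity bound—by induction on $\tmop{card}(I)$. The base case $I = \emptyset$ will be direct, since then $M$ is either empty or $(1)$ and the algorithm simply returns $v$. For the inductive step on the first claim, I would argue that the algorithm implements, read right-to-left, the four block factors of $M^{-1}$ already displayed inside the proof of Proposition \ref{prop:adaptedadapted}: Step 3 applies $M_{1,1}^{-1} = \tmop{Mat}(\tmop{Ada}(\Xi), \Xi)^{-1}$ via a recursive call, valid by induction since $|I'| < |I|$; Step 4 multiplies by the block factor involving $-M_{2,1}$, invoking the observation from that same proof that the $\ell_0$-columns of $M_{2,1}$ vanish, so the product collapses to $\tmop{Mat}((\tmop{Ada}(\Xi'), i), \Sigma_\star)$ acting on $t(\ell_\star)$; Step 5a applies $M_{2,2}^{-1}$ via a second recursive call; and Steps 5b-5c apply the $\tilde{\tmop{Id}}$ subtraction, which only affects the $\ell_0$-positions and leaves the $\ell_\star$-positions unchanged. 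Definition \ref{defn_Info} together with the fact that Algorithm \texttt{Adapted Family} preserves row order will show that the lists $\ell, \ell_0, \ell_1, \ell_\star$ correctly encode the bijections $\tmop{Ada}(\Sigma) \leftrightarrow \tmop{Ada}(\Xi) \sqcup (\tmop{Ada}(\Xi'), i)$ and $\Sigma \leftrightarrow \Sigma_0 \sqcup \Sigma_1 \sqcup \Sigma_\star$ needed to make the identifications precise. The two degenerate branches are exactly those handled by the algorithm's conditionals: when $m^0 = 0$ one has $\Xi' = \emptyset$ and Steps 4-5 are vacuous, and when $m^\star = 0$ the matrix $M_{2,1}$ is zero and Step 4 is a no-op.

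The second claim will follow at once from the first: under $\tmop{Feas}(\mathcal{Q}, Z) \subset \Sigma$, Proposition \ref{pro:matrix of signs} applied with $A = \tmop{Ada}(\Sigma)$ yields $M \cdot c(\Sigma, Z) = \tmop{Qu}(\mathcal{Q}^{\tmop{Ada}(\Sigma)}, Z) = v$, and invertibility of $M$ (from Proposition \ref{prop:adaptedadapted}) gives $c = c(\Sigma, Z)$. For the bit-complexity bound I would split the work at each recursion node into combinatorial bookkeeping and arithmetic. Scanning the last column of $\tmop{Info}(\Sigma)$ to extract $\ell_0, \ell_1, \ell_\star$ and reading the relevant submatrices of $M$ costs $O(\tmop{card}(\Sigma_{\text{node}}))$ bit operations per node; since the recursion has depth at most $\tmop{card}(I)$ and total sub-list size at each level bounded by $\tmop{card}(\Sigma)$, these costs sum to the $O(\tmop{card}(\Sigma)\tmop{card}(I))$ term. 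For the arithmetic, the crucial observation is that by applying the semantic identification of the second claim recursively, every integer occurring as an entry of $v$, $t$ or $c$ during execution is a nonnegative count of a subset of $Z$ and hence bounded by $r$, so each entry has bit size $O(\tmop{bit}(r))$. At a node the arithmetic cost is $O(m^0 m^\star)$ for Step 4 and $O(m^0)$ for Step 5b, and the recurrence $A(n) \le A(n_1) + A(n_2) + O(n_1 n_2)$ with $n_1 + n_2 = n$ solves to $O(\tmop{card}(\Sigma)^2)$ arithmetic operations in total, each costing $O(\tmop{bit}(r))$ bit operations, producing the $O(\tmop{card}(\Sigma)^2 \tmop{bit}(r))$ term.

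The main obstacle I anticipate is the careful tracking of the implicit bijections in the block factorization—especially the correspondence $\ell_0 \leftrightarrow \ell_1$ via $I'$-restriction that underlies the action of $\tilde{\tmop{Id}}$ in Steps 5b-5c, and the identification between $\ell$-indexed sub-vectors of $v$ and $t$ and vectors naturally indexed by $\tmop{Ada}(\Xi)$ that is needed to make the recursive call in Step 3 well-defined. Once these matching conventions are pinned down, the remainder reduces to a direct comparison of the four algorithm steps with the four block factors of $M^{-1}$ exhibited in the proof of Proposition \ref{prop:adaptedadapted}, together with the routine solve-recurrence analysis for the arithmetic work count.
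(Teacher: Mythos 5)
Your proposal is correct and follows essentially the same route as the paper: correctness via reading Steps 3--5 as the four block factors of $\tmop{Mat}(\tmop{Ada}(\Sigma),\Sigma)^{-1}$ exhibited in the proof of Proposition~\ref{prop:adaptedadapted}, the identification $c = c(\Sigma,Z)$ via Proposition~\ref{pro:matrix of signs}, and the complexity bound by interpreting the intermediate vectors semantically (as invertibility-queries on $Z$ or on the sub-zero-set $Z^1$) so every entry is a nonnegative integer at most $r$, then summing the per-node bookkeeping over depth $\le \tmop{card}(I)$ and closing the arithmetic recurrence $A(n_1)+A(n_2)+O(n_1 n_2)$ with $n_1+n_2=n$ to $O(\tmop{card}(\Sigma)^2)$. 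The paper is slightly more explicit about naming the recursive call arguments (it identifies the Step~5a input as $\tmop{Qu}(\mathcal{Q}'^{\tmop{Ada}(\Xi')}, Z^1)$ with $Z^1 = \bigcup_{\sigma\in\Sigma_1}\tmop{Reali}(\sigma,Z)$) and leaves the final induction implicit, but the substance is the same.
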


\begin{proof}{Proof:}
The correctness of the algorithm follows from the 
formula for the inverse of $\tmop{Mat}(\tmop{Ada}(\Sigma), \Sigma)$
coming from the proof of Proposition 
\ref{prop:adaptedadapted}. 
The fact that $v = \tmop{Qu}( \mathcal{Q}^{\tmop{Ada} ( \Sigma) }, Z )$ implies 
$c = c(\Sigma, Z)$ follows from Proposition \ref{pro:matrix of signs}. 
Now we deal with the complexity analysis. 
Note that the matrices $\tmop{Info}(\Xi), \tmop{Mat}(\tmop{Ada}(\Xi), \Xi),$
$\tmop{Info}(\Xi')$ and $\tmop{Mat}(\tmop{Ada}(\Xi'), \Xi')$, which 
are necesary to do the recursive calls, and the matrix 
$\tmop{Mat}((\tmop{Ada}(\Xi'), i), \Sigma_\star)$, which is necesary at Step \ref{ls:step_4}
can be extracted from the matrices $\tmop{Info}(\Sigma)$ and 
$\tmop{Mat}(\tmop{Ada}(\Sigma), \Sigma)$.

A rough description of Step \ref{ls:step_2} is the following:  
consider first all the lists we want to determine as empty and then, 
reading the last column of $\tmop{Info}(\Sigma)$ one row at a time, 
actualize the right lists and their lengths. 
Taking into account that increasing $n$ times the quantity $0$  by $1$  takes
$O(n)$ bit operations, this step takes at most $C_1 \tmop{card}(\Sigma)$ bit operations
for some constant $C_1$. 

At Step \ref{ls:step_3}, the recursive call to Algorithm \texttt{Linear Solving}
is done for the list $\Xi$ and the vector 
$\tmop{Qu}({\cal Q}^{\tmop{Ada}(\Xi)}, Z)$ and since
$\tmop{Feas} (\mathcal{Q} , Z ) \subset \Sigma$, we have that 
$\tmop{Feas} (\mathcal{Q'} , Z ) \subset \Xi$ where $\mathcal{Q'}$ is the set
obtained from $\mathcal{Q}$ by removing its element of index $i$.

Step \ref{ls:step_4} takes first $2$ bit operations to decide if $m^0 \ne 0$ and $m^\star \ne 0$. 
Since
$v(\ell_1) = \tmop{Qu}({\cal Q}^{(\tmop{Ada}(\Xi'), i)}, Z)$,  
all its entries are nonnegative integers less than or equal to 
$r$. The product $\tmop{Mat}( (\tmop{Ada}(\Xi'), i), \Sigma_\star)t(\ell_\star)$ 
is computed
by reading if the elements in $\tmop{Mat}( (\tmop{Ada}(\Xi'), i), \Sigma_\star)$ are $0$ or $1$
and adding the corresponding elements from $t(\ell_\star)$. 
Since 
$t(\ell_\star)$
is a subvector of $c(\Xi, Z)$, 
all its entries are nonnegative integers and their sum is less than or equal to $r$. 
So, we conclude that this step takes at most $2 + C_2 m^0 m^\star \tmop{bit}(r)$ bit operations for
some constant $C_2$.

The begining of Step \ref{ls:step_5} takes $1$ bit operation to decide if $m^0 \ne 0$. 
It is easy to see that at Step \ref{ls:step_5}(a), the recursive call to 
Algorithm \texttt{Linear Solving}
is done for the list $\Xi'$ and the vector 
$\tmop{Qu}({\cal Q}^{\tmop{Ada}(\Xi')}, Z^1)$ where 
$$Z^1 
= \bigcup_{\sigma \in \Sigma_1} \tmop{Reali}(\sigma, Z)
.$$ 
Also, by definition of $Z^1$, we have that 
$\tmop{Feas} (\mathcal{Q}' , Z^1 ) \subset \Xi'$.
On the other hand, it is also easy to see that Step \ref{ls:step_5}(b)
takes $C_3m^0\tmop{bit}(r)$ bit operations for some constant $C_3$.

After this analysis, the bound on the bit complexity can be proved by induction 
in $\tmop{card}(I)$. 
\end{proof}

\begin{notation}
Let $J\subset \{1,\ldots,s\}$ and $j \in \{1, \dots, s\}$ bigger than $\max(J)$. 
For a zero-nonzero condition $\sigma \in \{0, 1\}^J$, we denote by 
$\sigma \wedge 0 \in \{0, 1\}^{J \cup \{j\}}$ (resp. 
$\sigma \wedge 1$) the zero-nonzero condition 
obtained by extending $\sigma$ with $\sigma(j) = 0$ (resp. $\sigma(j) = 1$). 
For a list 
$\Sigma = [\sigma_1, \dots, \sigma_n] \subset \{0, 1\}^J$ of zero-nonzero conditions, we
denote by 
$\Sigma \wedge \{0, 1\}^{\{j\}} \subset \{0, 1\}^{J \cup \{j\}}$ the 
list of zero-nonzero conditions 
$[\sigma_1 \wedge 0, \sigma_1 \wedge 1, \dots, \sigma_n \wedge 0, \sigma_n \wedge 1]
$ and by
$\Sigma \wedge \{0\}^{\{j\}} \subset \{0, 1\}^{J \cup \{j\}}$
(resp. $\Sigma \wedge \{1\}^{\{j\}}$)
 the 
list of zero-nonzero conditions 
$
[\sigma_1 \wedge 0,  \dots, \sigma_n \wedge 0]
$ (resp. $[\sigma_1 \wedge 1,  \dots, \sigma_n \wedge 1]$).

\end{notation}

We are now ready for our main algorithm. 
This algorithm 
determines
iteratively, for $i = 0, \dots, s$ the list 
$\Sigma_i := \tmop{Feas} ( \mathcal{P}_i, Z)$  of
the  zero-nonzero conditions realized by $\mathcal{P}_i$ on $Z$, and the
corresponding list $c_i := c ( \mathcal{P}_i, Z)$ of cardinals. In order to do so within a good 
complexity bound, the algorithm also computes at each step 
the compressed set of indices $\tmop{comp}_i := \tmop{comp}(\Sigma_i)$, 
the matrix $\tmop{Info}_i := \tmop{Info}(\tmop{Comp}(\Sigma_i))$, 
the list $\tmop{Ada}_i := \tmop{Ada}(\Sigma_i)$
and 
the matrix $\tmop{Mat}_i := \tmop{Mat}(\Sigma_i,  \tmop{Ada}(\Sigma_i))$.

\medskip

{\noindent}\tmtextbf{Algorithm  } \texttt{Zero-nonzero
Determination}

\begin{itemize}
  \item \tmtextbf{Input:} a finite subset $Z \subset \mathup{C}^k$ with $r$
  elements and a finite list $\mathcal{P} = P_1, \ldots, P_s$ of polynomials
  in $\mathup{L} [X_1, \ldots, X_k]$.
  
  \item \tmtextbf{Output:} the list $\tmop{Feas} ( \mathcal{P}, Z)$  of
the  zero-nonzero conditions realized by $\mathcal{P}$ on $Z$, and the
  corresponding list $c ( \mathcal{P}, Z)$ of cardinals.
  
  \item \tmtextbf{Blackbox:} for a polynomial $Q \in \mathup{L} [X_1, \ldots, X_k]$, the 
invertibility-query
  blackbox $\tmop{Qu} (Q, Z) .$
  
 \item \tmtextbf{Procedure:}
    \begin{enumerate}

\item Compute ${r = \tmop{Qu} (1, Z)}$ using the invertibility-query
  blackbox. If $r = 0$, output $\tmop{Feas} ( \mathcal{P}, Z) = \emptyset$ and  
$c( \mathcal{P}, Z) = \emptyset$. So from now  we suppose $r > 0$.

\item Initialize $\Sigma_0 = [\emptyset]$, $c_0 = [r]$, 
$\tmop{comp}_0 = \emptyset$, 
$\tmop{Info}_0$ as the matrix with $1$ 
row and $0$ columns,
$\tmop{Ada}_0 = [\emptyset]$ and $\tmop{Mat}_0 = (\, 1 \, )$.

\item \label{step:3:ma}   For $i$ from 1 to $s$:
  \begin{enumerate}
    
\item Compute $\tmop{Qu} (P_i, Z)$ using the invertibility-query blackbox.
    
\item Using the equality
    \[ \label{10:eqn:BKR} \left(\begin{array}{cc}
         1 & 1\\
         0 & 1
       \end{array}\right) \cdot 
       \left(\begin{array}{c}
         c (P_i = 0, Z_{})\\
         c (P_i \ne 0, Z_{})
       \end{array}\right) = \left(\begin{array}{c}
         \tmop{Qu} (1, Z)\\
         \tmop{Qu} (P_i, Z)
       \end{array}\right) \]
compute $c (P_i = 0, Z)$ and $c (P_i \ne 0, Z)$.

\item If $c(P_i = 0, Z) = 0$ (resp. $c(P_i \ne 0, Z) = 0$), 
$\Sigma_i = \Sigma_{i-1} \wedge \{1\}^{\{i\}}$ (resp. $\Sigma_{i-1} \wedge \{0\}^{\{i\}}$), 
$c_i = c_{i-1}$, $\tmop{comp}_i = \tmop{comp}_{i-1}$,
$\tmop{Info}_i = \tmop{Info}_{i-1}$, 
$\tmop{Ada}_i = \tmop{Ada}_{i-1}$ and $\tmop{Mat}_i = \tmop{Mat}_{i-1}$.

Else if $c(P_i = 0, Z) > 0$ and $c(P_i \ne 0, Z) > 0$:

\begin{enumerate}

\item Compute $v' = \tmop{Qu} ( \mathcal{P}_i^{( \tmop{Ada}_{i-1},i)}, Z)$  using 
the    invertibility-query blackbox. 
    
\item Take the auxiliary list $\Sigma = \tmop{Comp}(\Sigma_{i-1})\wedge\{0, 1\}^{\{i\}}$
and  
determine  $\tmop{Info}(\Sigma)$ and $\tmop{Mat}(\tmop{Ada}( \Sigma), \Sigma)$. 
Consider the integer vector $v$ of size $\tmop{card}(\Sigma)$ having in its
odd entries the entries of $\tmop{Qu} ( \mathcal{P}_i^{\tmop{Ada}_{i-1}}, Z)$ 
(which has already been computed at 
previous steps) and in its even entries the entries of $v'$. 
Compute
$c = \tmop{Mat}(\tmop{Ada}( \Sigma), \Sigma)^{-1} v$,
using Algorithm \texttt{Linear Solving}.

\item Compute $c_i$ removing from $c$ its zero components. 
Compute also $\Sigma_i$ 
going trough $c$ by pairs of elements (note that each pair will have at least
one element different from zero). If both elements are different from zero, the 
corresponding zero-nonzero condition in $\Sigma_{i-1}$ is extended both with a $0$ and a $1$
in $\Sigma_i$.  If only the first (resp. second) element of the pair 
is different from zero, the 
corresponding zero-nonzero condition in $\Sigma_{i-1}$ is extended only with a $0$ (resp. $1$) 
in $\Sigma_i$. At the same time, compute the lists 
$\ell_0, \ell_1$ and $\ell_\star$.

\item  If $\tmop{card} ( \Sigma_i ) = \tmop{card} ( \Sigma_{i
      - 1})$, 
$\tmop{comp}_i = \tmop{comp}_{i-1}$, 
$\tmop{Info}_i = \tmop{Info}_{i-1}$,
$\tmop{Ada}_i = \tmop{Ada}_{i-1}$ and $\tmop{Mat}_i = \tmop{Mat}_{i-1}$.

Else if $\tmop{card} ( \Sigma_i ) > \tmop{card} ( \Sigma_{i
      - 1})$:
\begin{itemize}
\item $\tmop{comp}_i = \tmop{comp}_{i-1} \cup \{i\}$.

\item Define $\Xi'_i= \Sigma_i(\ell_1,  \tmop{comp}_{i-1}))$.

\item Compute $\tmop{Info}(\Xi'_i)$, 
using Algorithm \texttt{Get Info}, 
and extract $\tmop{Ada}(\Xi'_i)$, 
using Algorithm \texttt{Adapted Family}.

\item Compute $\tmop{Info}_i$, 
from $\tmop{Info}_{i-1}$ and  $\tmop{Info}(\Xi'_i)$
and  $\tmop{Ada}_i$ from $\tmop{Ada}_{i-1}$ and $\tmop{Ada}(\Xi'_i)$.

\item Compute
$\tmop{Mat}((\tmop{Ada}(\Xi'_i), i), \Sigma')$ 
with  $\Sigma' = \Sigma_i(\ell,  \tmop{comp}_{i}))$ and 
finally $\tmop{Mat}_i$.

\end{itemize}

\end{enumerate}
\end{enumerate}
  \item Output $\tmop{Feas} ( \mathcal{P}, Z) = \Sigma_s$ and
  $\tmop{c}( \mathcal{P}, Z) = c_s$.
  \end{enumerate}
\end{itemize}

\begin{theorem}\label{main_theorem} Given a finite 
subset $Z \subset \mathup{C}^k$ with $r$
elements and a finite list $\mathcal{P} = P_1, \ldots, P_s$ of polynomials
in $\mathup{L} [X_1, \ldots, X_k]$, Algorithm {\rm \texttt{Zero-nonzero Determination}}
computes the list $\tmop{Feas} ( \mathcal{P}, Z)$  of
the  zero-nonzero conditions realized by $\mathcal{P}$ on $Z$ and the
corresponding list $c( \mathcal{P}, Z)$ of cardinals. The complexity
of this algorithm is  
$O(s r^2 \tmop{bit}(r))$ bit operations
plus $1 + sr$ calls to the to the
invertibility-query blackbox which
are done for products of
at most $\tmop{bit}(r)$ products of polynomials in $\mathcal{P}$.
\end{theorem}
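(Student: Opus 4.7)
The plan is to prove correctness by induction on $i$, verifying that after iteration $i$ of the main loop the data $\Sigma_i$, $c_i$, $\tmop{comp}_i$, $\tmop{Info}_i$, $\tmop{Ada}_i$, $\tmop{Mat}_i$ satisfy the defining identities of their names relative to $\mathcal{P}_i = P_1, \ldots, P_i$. The base case comes from the initialization. For the inductive step I would distinguish whether both of $c(P_i = 0, Z)$ and $c(P_i \ne 0, Z)$ are strictly positive. If one of them vanishes, every condition in $\Sigma_{i-1}$ extends uniquely, so $\Sigma_i$ is obtained by appending a constant bit and the auxiliary objects carry over unchanged. If both are positive, the auxiliary list $\Sigma = \tmop{Comp}(\Sigma_{i-1}) \wedge \{0,1\}^{\{i\}}$ contains $\tmop{Feas}(\mathcal{P}_{\tmop{comp}_{i-1}\cup\{i\}}, Z)$, and Proposition \ref{pro:matrix of signs} together with Proposition \ref{prop:adaptedadapted} guarantees that Algorithm \texttt{Linear Solving} returns $c(\Sigma, Z)$ from $v = \tmop{Qu}(\mathcal{P}_i^{\tmop{Ada}(\Sigma)}, Z)$. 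Discarding the zero entries produces $\Sigma_i$ and $c_i$, while the updates of $\tmop{comp}_i$, $\tmop{Info}_i$, $\tmop{Ada}_i$, $\tmop{Mat}_i$ are checked directly against Definitions \ref{def:adap}, \ref{def:comp} and \ref{defn_Info} together with Remark \ref{rem:comp}.

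For the blackbox call count, the initial computation of $r = \tmop{Qu}(1, Z)$ contributes one call, and at iteration $i$ there are at most $\tmop{card}(\Sigma_{i-1})$ queries: just $\tmop{Qu}(P_i, Z)$ in the non-splitting case, and otherwise the $\tmop{card}((\tmop{Ada}_{i-1}, i)) = \tmop{card}(\Sigma_{i-1})$ entries of $v'$, which already contain $\tmop{Qu}(P_i, Z)$ because $\emptyset \in \tmop{Ada}_{i-1}$. Using $\tmop{card}(\Sigma_{i-1}) \le r$ at every step gives at most $1 + sr$ total calls. The product size is immediate from Proposition \ref{pro:bkr} applied to $\Sigma_{i-1}$: every $J \in (\tmop{Ada}_{i-1}, i)$ satisfies $\tmop{card}(J) \le \tmop{card}(J \setminus \{i\}) + 1 \le \tmop{bit}(\tmop{card}(\Sigma_{i-1})) \le \tmop{bit}(r)$.

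For the bit complexity I would bound each piece of iteration $i$ separately. Solving the $2 \times 2$ system costs $O(\tmop{bit}(r))$. The dominant cost per iteration is the call to Algorithm \texttt{Linear Solving}, which by Proposition \ref{pro:adaptedadaptedquick} runs in $O(\tmop{card}(\Sigma) \cdot \tmop{card}(\tmop{comp}_{i-1}\cup\{i\}) + \tmop{card}(\Sigma)^2 \tmop{bit}(r)) = O(r^2 \tmop{bit}(r))$ bit operations, since $\tmop{card}(\Sigma) \le 2r$ and $\tmop{card}(\tmop{comp}_{i-1}\cup\{i\}) \le r$. The remaining steps, namely the calls to Algorithms \texttt{Get Info} and \texttt{Adapted Family} on $\Xi'_i$ and the incremental assembly of $\tmop{Info}_i$ and $\tmop{Mat}_i$ from the previous data, are bounded via Lemma \ref{lem:adapted} and the complexity of \texttt{Get Info}; a careful accounting using $\sum_i \tmop{card}(\Xi'_i) \le r - 1$ absorbs their aggregate contribution into the desired bound. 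Summing over $i = 1, \ldots, s$ gives the announced $O(sr^2 \tmop{bit}(r))$ total.

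The main delicate point I expect is the bookkeeping in the inductive step: verifying that the incremental construction of $\tmop{Info}_i$ and $\tmop{Mat}_i$ by pasting $\tmop{Info}_{i-1}$, $\tmop{Info}(\Xi'_i)$ and the freshly queried matrix blocks really produces $\tmop{Info}(\tmop{Comp}(\Sigma_i))$ and $\tmop{Mat}(\tmop{Ada}_i, \Sigma_i)$. This requires carefully matching the block decomposition used in the proof of Proposition \ref{prop:adaptedadapted} with the row and column indexings coming from Definitions \ref{def:comp} and \ref{defn_Info}. Once this bookkeeping is pinned down, both the correctness of the update and the per-step complexity follow routinely from the already-proved bounds on the sub-algorithms.
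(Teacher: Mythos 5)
Your proposal is correct and follows essentially the same route as the paper: correctness via Propositions \ref{pro:matrix of signs} and \ref{prop:adaptedadapted} and Remark \ref{rem:comp}, the query count and product-size bound via Proposition \ref{pro:bkr}, and the bit complexity dominated by the per-iteration call to \texttt{Linear Solving} (bounded through Proposition \ref{pro:adaptedadaptedquick}) with the remaining bookkeeping absorbed via $\sum_i \tmop{card}(\Xi'_i) \le r-1$. The ``delicate point'' you flag about assembling $\tmop{Info}_i$ and $\tmop{Mat}_i$ from the block structure of~(\ref{matrixMat_permutated}) is also left at the same level of detail in the paper itself, so nothing is missing relative to the published argument.
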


\begin{proof}{Proof:}  The correctness of the algorithm follows from 
Proposition \ref{pro:matrix of signs}, Proposition \ref{prop:adaptedadapted}   and Remark \ref{rem:comp}. 

We prove first the bound on the number of bit operations. 

Step \ref{step:3:ma}(b) and evaluating the conditions $c(P_i = 0, Z) = 0$ and $c(P_i \ne 0, Z) = 0$
at the beginning of Step \ref{step:3:ma}(c) take $O(\tmop{bit}(r))$ bit operations. 

At Step \ref{step:3:ma}(c)ii,
$\Sigma$ is obtained by duplicating each row in 
$\Sigma_{i-1}([1, \dots, \tmop{card}(\Sigma_{i-1})], \tmop{comp}_{i-1})$ and adding a final new column with 
a $0$ in the odd rows and a $1$ in the even rows, 
and $\tmop{Info}(\Sigma)$ is obtained in a 
similar way. 
The matrix  $\tmop{Mat}(\tmop{Ada}( \Sigma), \Sigma)$
is obtained from
$\tmop{Mat}_{i-1}$ following the formula 
(\ref{matrixMat_permutated}) in the proof of Proposition \ref{prop:adaptedadapted}. 
Since $\tmop{card}(\tmop{Comp}(\Sigma_{i-1})) \le r$ and $\tmop{card}(\tmop{comp}_{i-1}) \le r - 1$, 
by Proposition 
\ref{pro:adaptedadaptedquick}, the computation of $c$ takes 
$O(r^2\tmop{bit}(r))$ bit operations. 

Step \ref{step:3:ma}(c)iii takes $O(r)$ bit operations. 

Since $\tmop{card}(\tmop{comp}_{i-1}) \le \min\{s, r-1\}$ for every $i$, 
at Step \ref{step:3:ma}(c)iv 
the computation of 
$\tmop{Info}(\Xi'_i)$ takes $O(\tmop{card}(\Xi'_i)sr)$ bit operations 
using Algorithm \texttt{Get Info}. By Lemma \ref{lem:adapted},  the computation of $\tmop{Ada}(\Xi'_i)$ takes $O(\tmop{card}(\Xi'_i)r)$ 
bit operations.
The computation of 
$\tmop{Info}_i$ and $\tmop{Ada}_i$ 
is then done following Definition \ref{defn_Info} and using the already 
computed matrices 
$\tmop{Info}_{i-1}$, $\tmop{Info}(\Xi'_i)$,
$\tmop{Ada}_{i-1}$ and $\tmop{Ada}(\Xi'_i)$
 and the lists $\ell_0, \ell_1$ and $\ell_\star$. 
By evaluating the product $\sigma^J$ for every
$\sigma \in \Sigma'$ and $J \in 
(\tmop{Ada}(\Xi'_i), i)$, the computation of 
$\tmop{Mat}((\tmop{Ada}(\Xi'_i), i), \Sigma')$ takes $O(\tmop{card}(\Xi'_i)sr)$ bit operations. 
The matrix $\tmop{Mat}_i$ is obtained following the formula 
(\ref{matrixMat_permutated}) in the proof of Proposition \ref{prop:adaptedadapted}
using the already computed matrices 
$\tmop{Mat}_{i-1}$ and $\tmop{Mat}((\tmop{Ada}(\Xi'_i), i), \Sigma')$.

It is easy to prove that
$\sum_{i = 1, \dots, s} \tmop{card}(\Xi'_i) 
= \tmop{card}(\tmop{Feas}({\cal P}, Z)) - 1
\le r-1$. From this, we conclude the number of bit operations of this algorithm is $O(sr^2\tmop{bit}(r))$.

Finally we prove the assertion on the invertibility-queries to compute. 
At Step 3, for $i = 1, \dots, s$, there are $\tmop{card}( \Sigma_{i - 1} ) \leqslant r$ new
invertibility-queries to determine. Therefore, the total number of calls to the
invertibility-query blackbox is bounded by $1 + sr$. 
Since by Proposition \ref{pro:bkr} the
  elements of  $\tmop{Ada}_{i-1}$ are subsets of
  $\left\{ 1, \ldots s \right\}$ with at most $\tmop{bit} \left( r \right) -
  1$ elements, 
these  calls  
are done for polynomials which are product of
at most  $\tmop{bit}(r)$ products of polynomials in $\mathcal{P}$. 
\end{proof}

\begin{defn}
  \label{10:rem:adapted} We denote by $\tmop{Used} ( \mathcal{P}, Z)$ the list of subsets of 
$\{1, \dots, s\}$  constructed inductively as follows:
\begin{itemize}
 \item $\tmop{Used} ( \mathcal{P}_0, Z) = \emptyset$. 
\item For $1 \le i \le s$, 
$$
\begin{cases}
     \tmop{Used} ( \mathcal{P}_i, Z) = \tmop{Used} ( \mathcal{P}_{i - 1},
    Z) \cup \left\{ i \right\} & \tmop{if} c (P_i = 0, Z) = 0 \tmop{or} c (P_i \ne 0, Z) = 0,\\
     \tmop{Used} ( \mathcal{P}_i, Z) = \tmop{Used} ( \mathcal{P}_{i - 1},
    Z) \cup  ( \tmop{Ada} (\tmop{Feas}(\mathcal{P}_{i-1}, Z)), i ) & \tmop{otherwise}.
   \end{cases}
$$

\end{itemize}
\end{defn}

\begin{remark}\label{remark_used_znz}
 It is easy to see that if $Z \ne \emptyset$, 
  $\tmop{Used} ( \mathcal{P}, Z)$ is exactly the list of subsets $J$ of
  $\left\{ 1, \ldots s \right\}$ such that the invertibility-query of $P^J$
  has been computed during the execution of Algorithm \emph{\texttt{Zero-nonzero
  Determination}}. It is also clear that $\tmop{Used} ( \mathcal{P}, Z)$ can
  be determined from $\tmop{Feas} ( \mathcal{P}, Z)$. 
As mentioned in Theorem \ref{main_theorem},  
the elements
 of $\tmop{Used} ( \mathcal{P}_{}, Z)$ are subsets of $\left\{ 1, \ldots s
  \right\}$ with at most $\tmop{bit}(r)$ elements. 
\end{remark}

\section{The real-nonreal sign determination problem}
 \label{sec:nonrealqueries}
 
We recall that $\mathup{K}$ is an ordered field, $\mathup{R}$ a real closed extension of 
$\mathup{K}$ and $\mathup{C} = \mathup{R}[i]$, $Z \subset \mathup{C}^k$ a finite set and 
${\cal P} = P_1, \dots, P_s$ a list of polynomials in $\mathup{K}[X_1, \dots, X_k]$.

Since $\tmop{Feas} ( \mathcal{P},
Z_{\mathup{C} \setminus \mathup{R}}) \subset 
\tmop{Feas} ( \mathcal{P},Z)$ 
and for every $\sigma \in \tmop{Feas} ( \mathcal{P},Z)$ we have that
\[
c( \sigma, Z_{\mathup{C} \setminus \mathup{R}}) = c(\sigma, Z ) - c(\sigma, Z_{\mathup{R}} ) = 
c(\sigma, Z ) - 
\sum_{\tau \in \Gamma_{\sigma}} 
c_{\rm sign}( \tau,Z_{\mathup{R}}) 
\]
where
 \[ \Gamma_{\sigma} = \left\{ \tau \in \tmop{Feas}_{\rm sign} ( \mathcal{P},
     Z_{\mathup{R}}) \mid \{i \mid \tau \left( i \right) = 0\} =
\{i \mid \sigma \left( i \right) = 0\}\right\}, \]
it is easy to combine algorithms for sign and zero-nonzero determination
to solve the real-nonreal sign determination problem as follows. 

\medskip

{\noindent}\tmtextbf{Algorithm  } \texttt{Real-nonreal Sign Determination}

\begin{itemize}
  \item \tmtextbf{Input:} a finite subset $Z \subset \mathup{C}^k$ with $r$
  elements and a finite list $\mathcal{P} = P_1, \ldots, P_s$ of polynomials
  in $\mathup{K} [X_1, \ldots, X_k]$.
  
  \item \tmtextbf{Output:} the lists
$\tmop{Feas}_{\rm sign} ( \mathcal{P}, Z_{\mathup{R}})$ and
 $\tmop{Feas} ( \mathcal{P}, Z_{\mathup{C}\setminus\mathup{R}})$  of
sign and zero-nonzero conditions realized by $\mathcal{P}$ on 
$Z_{\mathup{R}}$ and $Z_{\mathup{C}\setminus\mathup{R}}$ respectivelly, and the
  corresponding lists $c_{\rm sign}( \mathcal{P}, Z_{\mathup{R}})$
and $c(\mathcal{P}, Z_{\mathup{C}\setminus\mathup{R}})$ of cardinals.
  
\item \tmtextbf{Blackbox 1:} for a polynomial $Q \in \mathup{K}[X_1, \ldots, X_k]$, the 
Tarski-query
  blackbox $\tmop{Qu} (Q, Z_{\mathup{R}}) .$

\item \tmtextbf{Blackbox 2:} for a polynomial $Q \in \mathup{K}[X_1, \ldots, X_k]$, the 
invertibility-query
  blackbox $\tmop{Qu} (Q, Z) .$

 \item \tmtextbf{Procedure:}
    \begin{enumerate}

\item Compute 
$\tmop{Feas}_{\rm sign} ( \mathcal{P}, Z_{\mathup{R}})$ and
$c_{\rm sign}( \mathcal{P}, Z_{\mathup{R}})$, as explained in \cite[Chapter 10]{BPRcurrent}.

\item Compute $\tmop{Feas}(\mathcal{P}, Z)$
and $c(\mathcal{P}, Z)$, using Algorithm \texttt{Zero-nonzero Determination}.

\item For $\sigma \in \tmop{Feas}( \mathcal{P},Z)$ 
compute 
$
c( \sigma, Z_{\mathup{C} \setminus \mathup{R}}) = 
c(\sigma, Z ) - 
\sum_{\tau \in \Gamma_{\sigma}} 
c_{\rm sign}( \tau,Z_{\mathup{R}}) 
$.

\item Define $\tmop{Feas}(\mathcal{P}, Z_{\mathup{C}\setminus\mathup{R}})$ as the list
of $\sigma \in \tmop{Feas}( \mathcal{P},Z)$ 
such that 
$c( \sigma, Z_{\mathup{C} \setminus \mathup{R}}) \ne 0$ and 
$c(\mathcal{P}, Z_{\mathup{C}\setminus\mathup{R}})$ as the corresponding list of cardinals.

\end{enumerate}
\end{itemize}

From the results in Section \ref{sec:comp} and \cite[Chapter 10]{BPRcurrent}, we deduce the following result. 

\begin{theorem}\label{2nd_theorem} Given a finite 
subset $Z \subset \mathup{C}^k$ with $r$
elements and a finite list $\mathcal{P} = P_1, \ldots, P_s$ of polynomials
in $\mathup{K} [X_1, \ldots, X_k]$, Algorithm \emph{\texttt{Real-nonreal Sign Determination}}
computes 
the lists
$\tmop{Feas}_{\rm sign} ( \mathcal{P}, Z_{\mathup{R}})$ and
 $\tmop{Feas} ( \mathcal{P}, Z_{\mathup{C}\setminus\mathup{R}})$  of
sign and zero-nonzero conditions realized by $\mathcal{P}$ on 
$Z_{\mathup{R}}$ and $Z_{\mathup{C}\setminus\mathup{R}}$ respectivelly, and the
  corresponding lists $c_{\rm sign}( \mathcal{P}, Z_{\mathup{R}})$
and $c(\mathcal{P}, Z_{\mathup{C}\setminus\mathup{R}})$ of cardinals.
The complexity
of this algorithm is  
$O(sr^2 \tmop{bit}(r))$ bit operations
plus $1 + 2sr$ calls to the to the
Tarski-query blackbox which
are done for products of
at most $\tmop{bit}(r)$ products of polynomials in $\mathcal{P}$ or squares of polynomials in 
$\mathcal{P}$,
plus $1 + sr$ calls to the to the
invertibility-query blackbox which
are done for products of
at most $\tmop{bit}(r)$ products of polynomials in $\mathcal{P}$.
\end{theorem}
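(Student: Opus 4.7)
The plan is to establish correctness by invoking the identity displayed immediately before the algorithm, and then to bound the complexity by summing the contributions of the four steps, treating the sign determination procedure of \cite[Chapter 10]{BPRcurrent} and the zero-nonzero determination of Theorem \ref{main_theorem} as black boxes whose costs are already known.

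For correctness, Steps 1 and 2 are certified by the two cited references, so the only new content is Step 3. There, the decomposition $Z = Z_{\mathup{R}} \sqcup Z_{\mathup{C}\setminus\mathup{R}}$ together with the observation that the zero-pattern of $\sigma$ on $Z_{\mathup{C}\setminus\mathup{R}}$ coincides with the zero-pattern of any sign condition $\tau$ on $Z_{\mathup{R}}$ projecting to the same $\sigma$ (which is precisely the definition of $\Gamma_\sigma$) makes the stated identity immediate. Step 4 is a filter by nonvanishing, and the inclusion $\tmop{Feas}(\mathcal{P}, Z_{\mathup{C}\setminus\mathup{R}}) \subset \tmop{Feas}(\mathcal{P}, Z)$ guarantees that no condition is missed.

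For the complexity, I will add four contributions. First, Step 1 costs $O(sr^2\tmop{bit}(r))$ bit operations plus $1+2sr$ Tarski-queries of the required shape, by \cite[Chapter 10]{BPRcurrent} applied to the set $Z_{\mathup{R}}$, which has at most $r$ elements. Second, Step 2 costs $O(sr^2\tmop{bit}(r))$ bit operations plus $1+sr$ invertibility-queries of the required shape, by Theorem \ref{main_theorem}. Third, in Step 3, since $\tmop{card}(\tmop{Feas}(\mathcal{P}, Z)) \le r$, $\tmop{card}(\tmop{Feas}_{\rm sign}(\mathcal{P}, Z_{\mathup{R}})) \le r$, and all cardinals involved are bounded by $r$, each sum and subtraction costs $O(r\tmop{bit}(r))$ bit operations, for an aggregate bound of $O(r^2\tmop{bit}(r))$. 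Fourth, Step 4 is a scan of at most $r$ entries and costs $O(r\tmop{bit}(r))$. Summing these four estimates yields the announced bit complexity, while the blackbox-query counts add transparently.

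The only technical point requiring care is the matching in Step 3 between each $\tau \in \tmop{Feas}_{\rm sign}(\mathcal{P}, Z_{\mathup{R}})$ and its parent $\sigma \in \tmop{Feas}(\mathcal{P}, Z)$, since a priori one might fear an $O(r^2 s)$ cost from comparing every $\tau$ against every $\sigma$. However, because both lists are ordered lexicographically, one can compute the projection $\sigma$ of each $\tau$ by replacing every nonzero sign entry of $\tau$ by $1$ and then perform a single linear merge to partition the $\tau$'s into the sets $\Gamma_\sigma$; this takes $O(rs)$ bit operations, which is well within budget. This is the step I expect to require the most bookkeeping, but no further new ideas beyond the ones already developed for Algorithm \texttt{Zero-nonzero Determination}.
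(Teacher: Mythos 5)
Your proposal is correct and follows essentially the same route as the paper, which simply deduces the theorem from Theorem \ref{main_theorem}, \cite[Chapter 10]{BPRcurrent}, and the identity relating $c(\sigma, Z_{\mathup{C}\setminus\mathup{R}})$, $c(\sigma, Z)$ and $\sum_{\tau \in \Gamma_\sigma} c_{\rm sign}(\tau, Z_{\mathup{R}})$ displayed before Algorithm \texttt{Real-nonreal Sign Determination}. The concern in your final paragraph is unnecessary: a naive $O(r^2 s)$ matching is already within the $O(sr^2\tmop{bit}(r))$ budget, and your ``single linear merge'' claim would in any case need the projection $\tau \mapsto \sigma$ to be order-preserving for the (unspecified) ordering of sign conditions, so it is simpler to just allow the quadratic scan.
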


\section{How to compute the queries?}\label{sec:queries}

For completeness, in this section we summarize the main methods to compute the invertibility-queries and 
Tarski-queries; all these methods are in fact closely related between them. We refer to \cite{BPR}
for notations, proofs and details. 

We deal first with the univariate case. Suppose that $Z \subset \mathup{C}$ is given 
as the zero set of a polynomial $P \in \mathup{K}[X]$. For simplicity, we assume $P$ to be monic. 
The invertibility-queries and Tarski-queries can be computed as follows. 
\begin{itemize}
  \item through the Sturm sequence $\tmop{Stu}(P, Q)$,  which is a slight
  modification of the sequence of remainders of $P$ and $P'Q$: 
  \begin{eqnarray*}
    \tmop{Qu} (Q, Z) & = & \deg( P ) - \deg( \gcd ( P,
    P' Q ) ),\\
    \tmop{TaQu} (Q, Z_{\mathup{R}}) & = & \tmop{var} ( 
\tmop{Stu}(P, Q); - \infty, \infty),
  \end{eqnarray*}
where $\tmop{var}$ is the number of sign variations in the corresponding sequence.
Note that $\gcd(P, P' Q )$ is the last element in $\tmop{Stu}(P, Q)$.
   
\item through the subresultant sequence $\tmop{sRes}(P, R)$ of $P$ and 
$R = \tmop{Rem}( P, P' Q)$:
  \begin{eqnarray*}
    \tmop{Qu} (Q, Z) & = & \deg( P ) - \deg( \gcd ( P,
    P' Q ) ),\\
    \tmop{TaQu} (Q, Z_{\mathup{R}}) & = & \tmop{PmV}( \tmop{sRes}
    ( P, R )),
  \end{eqnarray*}
  where 
  $\tmop{PmV}$ is a generalization of the difference between the number of
  positive elements and negative elements in the corresponding sequence.
Note that $\gcd(P, P' Q )$ is the last nonzero element in $\tmop{sRes}(P, R)$.

    \item through the Bezoutian matrix $\tmop{Bez}( P, R)$ of $P$ and  
$R = \tmop{Rem}( P, P' Q)$:
  \begin{eqnarray*}
    \tmop{Qu} (Q, Z) & = & \tmop{rank}( \tmop{Bez}( P, R)),\\
    \tmop{TaQu} (Q, Z_{\mathup{R}}) & = & \tmop{sign}( \tmop{Bez}( P, R)).
  \end{eqnarray*}
\end{itemize}

Now we deal with the multivariate case (including the univariate case). 
Suppose that $Z \subset \mathup{C}^{k}$ is given as the zero set of a polynomial system in 
$\mathup{K}[X_1,\dots, X_k]$. The invertibility-queries and Tarski-queries can be computed as follows.
\begin{itemize}
  \item through the Hermite matrix $\tmop{Her}(P, Q)$:
  \begin{eqnarray*}
    \tmop{Qu} (Q, Z) & = & \tmop{rank} (\tmop{Her}( P, Q)),\\
   \tmop{TaQu} (Q, Z_{\mathup{R}}) & = & \tmop{sign} (\tmop{Her}
    ( P, Q )).
  \end{eqnarray*}
\end{itemize}

For the univariate case, we deduce the following result:

\begin{theorem}

\begin{enumerate} 
\item \label{first_item} Let $P \in \mathup{L}[X]$ 
and ${\cal P} = P_1, \dots, P_s$ be a finite list of polynomials in 
$\mathup{L}[X]$ such that the degree of $P$ and the polynomials in ${\cal P}$ 
is bounded by $d$.
The complexity of zero-nonzero determination is 
$\tilde{O}(sd^2)$
bit operations plus 
$\tilde{O}(sd^2)$
arithmetic operations in $\mathup{L}$.
If $P$ and the polynomials in ${\cal P}$ lie in $\mathbb{Z}[X]$ and the bit size of the 
coefficients of all these polynomails is bounded by $\tau$, the complexity of
zero-nonzero determination is 
$\tilde{O}(\tau s d^3)$ 
bit operations.

\item Let $P \in \mathup{K}[X]$ 
and ${\cal P} = P_1, \dots, P_s$ be a finite list of polynomials in 
$\mathup{K}[X]$ such that the degree of $P$ and the polynomials in ${\cal P}$ 
is bounded by $d$.
The complexity of real-nonreal sign determination is 
$\tilde{O}(sd^2)$
bit operations plus 
$\tilde{O}(sd^2)$
arithmetic operations in $\mathup{K}$.
If $P$ and the polynomials in ${\cal P}$ lie in $\mathbb{Z}[X]$ and the bit size of the 
coefficients of all these polynomails is bounded by $\tau$, the complexity of
real-nonreal sign determination is 
$\tilde{O}(\tau s d^3)$
bit operations.
\end{enumerate}
\end{theorem}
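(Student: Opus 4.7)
{Proof proposal:}
The plan is to instantiate Theorem \ref{main_theorem} (for the zero-nonzero case) and Theorem \ref{2nd_theorem} (for the real-nonreal case) with $r \le d$ (since $Z$ is contained in the zero set of a polynomial of degree $d$), and then account separately for the cost of the individual query calls using the univariate formulas collected in Section \ref{sec:queries}. This splits the analysis into two essentially independent pieces: the bit-operation cost of the combinatorial/linear-algebra bookkeeping, which is already accounted for by Theorem \ref{main_theorem} as $O(sr^2 \operatorname{bit}(r)) = \tilde O(sd^2)$, and the arithmetic cost of answering the $1 + sr = O(sd)$ blackbox queries.

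For each blackbox query, the input polynomial $Q$ is a product of at most $\operatorname{bit}(r) \le \operatorname{bit}(d)$ polynomials drawn from $\mathcal{P}$, so $\deg Q = O(d \log d)$. Using the Sturm/subresultant description of $\operatorname{Qu}(Q,Z) = \deg P - \deg \gcd(P, P'Q)$ (respectively $\operatorname{TaQu}$ for Tarski-queries), the natural strategy is to first reduce $Q$ modulo $P$ by a balanced product tree computed modulo $P$: reducing each $P_i$ modulo $P$ costs $\tilde O(d)$ and multiplying the $O(\log d)$ reductions pairwise modulo $P$ adds $\tilde O(d)$ per multiplication, giving $\tilde O(d)$ per query once logarithmic factors are absorbed. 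Fast half-gcd then yields $\gcd(P, P'Q \bmod P)$ in $\tilde O(d)$ arithmetic operations in $\mathup{L}$ (resp.\ $\mathup{K}$), and $\operatorname{PmV}$ of the subresultant sequence is read off at the same cost. Summing over the $O(sd)$ queries gives $\tilde O(sd^2)$ arithmetic operations in the base field, which added to the $\tilde O(sd^2)$ bit operations of Theorems \ref{main_theorem} and \ref{2nd_theorem} establishes the first halves of items 1 and 2. For the real-nonreal case, the classical sign determination step contributes $\tilde O(sd^2)$ operations by the same estimates applied to the Tarski-query blackbox (see \cite[Corollary~2]{Per}), so the bound is preserved.

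For the integer case we keep the same outline but replace arithmetic operations by bit operations, tracking coefficient growth. The polynomials $P_i \bmod P$ lie in $\mathbb{Z}[X]$ of degree less than $d$; using subresultant-style representatives (or balanced rational reconstruction) each reduction and each multiplication modulo $P$ can be arranged so that intermediate coefficients have bit size $\tilde O(\tau d)$, which is optimal up to logarithmic factors by standard Mignotte-type bounds on divisors of $P$ and on $\gcd(P, P'Q)$. Fast multiplication then costs $\tilde O(\tau d^2)$ bit operations per query, and fast half-gcd gives the same asymptotic bound for the final $\gcd$ computation. Summing $\tilde O(\tau d^2)$ over $O(sd)$ queries yields $\tilde O(\tau s d^3)$ bit operations, which dominates the $\tilde O(sd^2)$ bookkeeping cost and gives the advertised bound in both items.

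The main obstacle is the integer case: naive bounds on the bit size of the intermediate polynomials obtained when forming the products of $\operatorname{bit}(d)$ of the $P_j$ followed by $\gcd$ computations would blow up by a factor of $\log d$ at each level and produce coefficients of bit size $\Omega(\tau d \log d)$ before reduction, which wastes a logarithmic factor; the subresultant bound on divisors of $P$ is what allows the $\tilde O$ notation to absorb this. The rest of the argument is a careful bookkeeping exercise combining the query-count bound of Theorem \ref{main_theorem} (and Theorem \ref{2nd_theorem}) with the standard $\tilde O$ complexities of univariate polynomial arithmetic and half-gcd recalled in Section \ref{sec:queries}.
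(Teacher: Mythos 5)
Your proposal follows essentially the same route as the paper: instantiate Theorems \ref{main_theorem} and \ref{2nd_theorem} with $r \le d$, observe that each of the $O(sd)$ blackbox calls involves a product of $O(\operatorname{bit}(d))$ polynomials from $\mathcal{P}$ (times $P'$), and then invoke fast univariate arithmetic (balanced product, fast gcd/subresultants \`a la \cite{Rei,LiR,vzG}) to get $\tilde O(d)$ field operations (resp.\ $\tilde O(\tau d^2)$ bit operations) per query. The paper states exactly this and cites the references rather than spelling out the product tree and coefficient-growth bookkeeping, so your extra detail is a welcome expansion rather than a deviation.

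One small inaccuracy worth flagging: you justify the $\tilde O(\tau d)$ bound on intermediate coefficients ``by standard Mignotte-type bounds on divisors of $P$.'' Mignotte bounds control coefficients of \emph{divisors} of $P$ (in particular $\gcd(P, P'Q)$ has bit size $O(\tau + d)$), but the intermediate reduced products $R_j = (\text{stuff}) \bmod P$ are not divisors of $P$, so Mignotte does not apply to them; the $\tilde O(\tau d)$ growth instead comes from the standard bound on remainders by a monic $P$ of bit size $\tau$ ($O(\tau d)$ growth per reduction), iterated $O(\log d)$ times. This does not affect the final estimate, but the attribution is wrong, and some care is also needed in the final gcd step to ensure that the large ($\tilde O(\tau d)$) coefficients of the reduced input do not inflate the subresultant sizes past $\tilde O(\tau d)$; the paper sidesteps this by working directly with $P'Q$ of degree $O(d\log d)$ and bit size $\tilde O(\tau)$, which keeps the subresultant entries small and lets Reischert's algorithm deliver $\tilde O(\tau d^2)$ cleanly.
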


\begin{proof}{Proof:}  To prove item \ref{first_item}, we estimate the complexity of computing
the invertibility-queries. 
At each call we have to compute the product between an already computed product of 
$P'$ and at most $\tmop{bit}(d)-1$ polynomials in ${\cal P}$ and a polynomial in ${\cal P}$; and then 
we have to compute the gcd 
between this polynomial and $P$.
The complexity of these computations
is 
$\tilde O(d)$ operations in $\mathup{L}$;  
in the case of integer coefficients, 
the complexity of these 
computations is $\tilde O(\tau d^2)$  bit operations (see 
\cite[Chapters 8 and 11]{vzG}). 
The result then follows directly from Theorem \ref{main_theorem}.

Item 2 is proved in a similar way, the Tarski-queries being computed 
following the subresultant 
sequence aproach as in \cite{LiR}.  
The Tarski-query of a product of
at most $2\tmop{bit}(d)$ polynomials in ${\cal P}$ for the real zero set of $P$ 
can be computed within $\tilde O(d)$ operations in $\mathup{K}$; 
in the case of integer coefficients, 
the complexity of this 
computation is $O(\tau d^2)$ bit operations (see \cite{Rei} and \cite{LiR}). 
The result then follows directly from Theorem \ref{2nd_theorem}.
\end{proof}

\section{Real-nonreal sign determination with parameters}\label{sec:parameters}

Let $P \in \mathup{K}[ Y_1, \dots, Y_m, X]$ with $P$ monic with
respect to $X$ and $\mathcal{P} = P_1, \dots, P_s \subset \mathup{K}[Y_1,
\dots, Y_m, X]$, where $Y_1,
\ldots, Y_m$ are parameters and $X$ is the main variable. 
Our goal is to describe a family of polynomials in $\mathup{K}[Y_1, \dots, Y_m]$ such that
the result of the 
real-nonreal sign determination problem  after 
specialization of the parameters  
is determined by a sign condition on this family. 
First, we introduce some notation. 

\begin{notation}
We denote by
$\tmop{Prod}_{\tmop{bit} \left( p \right)} (
\mathcal{P})$ the family of polynomials which are of the
form
\[ {\mathcal{P}}^{\alpha} = \prod_{1 \le i \le s}{P_i^{\alpha_i}} \]
with $\alpha = (\alpha_1, \dots, \alpha_s) \in \{0, 1, 2\}^{\{1, \ldots, s\}}$ such 
that $\tmop{card}\{i \ | \ \alpha_i \neq 0\} \le \text{\tmtextrm{\tmop{bit}}} (p)$.
For $Q\in \mathup{K}[ Y_1, \dots, Y_m, X]$, we denote by $\tmop{HMin} (P, Q)$
the subset of $\mathup{K}[ Y_1, \dots, Y_m]$ formed by the principal minors of 
$\tmop{Her}(P,Q)$. 
Finally, we denote by
\[ \tmop{HElim}( P, \mathcal{P}) = \bigcup_{Q \in
   \tmop{Prod}_{\tmop{bit} ( p)} (
   \mathcal{P})}\tmop{HMin} (P, Q).
\]
\end{notation}

Now we can state our result. 

\begin{theorem}
For $y \in \mathup{R}^m$, let $Z(y) \subset \mathup{C}$ be the zero set 
of $P(y_1, \dots, y_m, X)$ and
$\mathcal{P}(y, X)$ the list in $\mathup{K}[X]$ 
obtained from $\mathcal{P}$ after specialization of
$Y_1, \dots, Y_m$ at $y_1, \dots, y_m$. 
Let $\tau$ be a sign condition on $\tmop{HElim} (P,
\mathcal{P})$. 
The lists
$\tmop{Feas}_{\rm sign} ( \mathcal{P}(y, X), Z(y)_{\mathup{R}})$ and 
$\tmop{Feas} ( \mathcal{P}(y, X),  Z(y)_{\mathup{C} \setminus \mathup{R}})$ and
their corresponding lists  
$c_{\rm sign} ( \mathcal{P}(y, X), Z(y)_{\mathup{R}})$,
and $c( \mathcal{P}(y, X),
Z(y)_{\mathup{C} \setminus \mathup{R}})$
of cardinals
are fixed as $y$ varies in $\tmop{Real}_{\rm sign}(\tau,
\mathup{R}^m)$.
\end{theorem}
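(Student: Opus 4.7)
The plan is to execute Algorithm \texttt{Real-nonreal Sign Determination} on the specialized input $(P(y, X), \mathcal{P}(y, X))$ and to verify that every decision the algorithm makes, and hence its output, is determined by the sign condition $\tau$ on $\tmop{HElim}(P, \mathcal{P})$ alone. Since $P$ is monic in $X$, the degree $p = \deg_X P$ is preserved under specialization, so $r := \tmop{card}(Z(y)) \le p$ for every $y \in \mathup{R}^m$. By Theorem \ref{2nd_theorem}, the algorithm issues $1 + 2sr$ Tarski-queries and $1 + sr$ invertibility-queries, each for a polynomial $Q$ that is a product of at most $\tmop{bit}(r) \le \tmop{bit}(p)$ polynomials from $\mathcal{P}$ or their squares. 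Thus for every $y$ each such $Q(y, X)$ arises as the specialization of an element of $\tmop{Prod}_{\tmop{bit}(p)}(\mathcal{P})$.

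Next I would combine the Hermite matrix formulas from Section \ref{sec:queries},
\[ \tmop{Qu}(Q, Z(y)) = \tmop{rank}(\tmop{Her}(P, Q)(y)), \qquad \tmop{TaQu}(Q, Z(y)_{\mathup{R}}) = \tmop{sign}(\tmop{Her}(P, Q)(y)), \]
with the classical fact that both the rank and the signature of a symmetric matrix with entries in $\mathup{R}$ are determined by the sign pattern of its principal minors (in the non-degenerate case by Jacobi's theorem, and in general by a standard refinement using all principal minors, as employed in \cite[Chapter 10]{BPRcurrent}). Since every principal minor of $\tmop{Her}(P, Q)$ with $Q \in \tmop{Prod}_{\tmop{bit}(p)}(\mathcal{P})$ lies in $\tmop{HElim}(P, \mathcal{P})$ by definition, the sign of such a minor at $y$ is determined by $\tau$.

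It then follows that every query answer returned to the algorithm is constant as $y$ ranges over $\tmop{Real}_{\rm sign}(\tau, \mathup{R}^m)$. Because Algorithm \texttt{Real-nonreal Sign Determination} is deterministic and its control flow branches only on the values of the queries and on symbolic bookkeeping of products of the $P_i$ (which is independent of $y$), the four output lists $\tmop{Feas}_{\rm sign}( \mathcal{P}(y, X), Z(y)_{\mathup{R}})$, $\tmop{Feas}( \mathcal{P}(y, X), Z(y)_{\mathup{C} \setminus \mathup{R}})$, $c_{\rm sign}( \mathcal{P}(y, X), Z(y)_{\mathup{R}})$ and $c( \mathcal{P}(y, X), Z(y)_{\mathup{C} \setminus \mathup{R}})$ are fixed on $\tmop{Real}_{\rm sign}(\tau, \mathup{R}^m)$.

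The step I expect to require the most care is the rank-and-signature-from-principal-minors reduction in the degenerate case where some leading principal minors vanish; handling this cleanly forces one to invoke the variant of Jacobi's theorem which recovers both rank and signature from the sign list of all principal minors of the Hermite matrix, rather than only from the leading ones.
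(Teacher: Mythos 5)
Your proposal is correct and follows essentially the same route as the paper: both arguments observe that every Tarski-query and invertibility-query issued by Algorithm \texttt{Real-nonreal Sign Determination} is for a polynomial in $\tmop{Prod}_{\tmop{bit}(p)}(\mathcal{P})$ (via the bound $r\le\deg_X P=p$ and the analysis in Theorem \ref{2nd_theorem} / Remark \ref{remark_used_znz}), reduce the query answers to the rank and signature of the corresponding Hermite matrices, and use the fact that rank and signature are determined by the sign pattern of all principal minors (the paper cites \cite[Chapter 9]{BPR} for this), so the sign condition $\tau$ on $\tmop{HElim}(P,\mathcal{P})$ pins down every query answer and hence, by determinism of the algorithm, the output. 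Your closing remark about the degenerate case is exactly the point for which the paper defers to \cite[Chapter 9]{BPR}, so your account is a faithful, slightly more explicit version of the paper's argument.
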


\begin{proof}{Proof :}
For every $P, Q \in \mathup{K}[X]$, 
the signs of the  principal minors of $\tmop{Her}(P, Q)$ determine the rank and
signature of $\tmop{Her}( P, Q )$ (see  \cite[Chapter 9]{BPR}). 
Therefore, by Remark \ref{remark_used_znz} and 
the analogous result in \cite[Chapter 10]{BPR}, 
as $y$ varies in $\tmop{Real}_{\rm sign}(\tau,
\mathup{R}^m)$, all the calls to the Tarski-query blackbox and
invertibility-query blackbox done in Algorithm \texttt{Real-nonreal Sign Determination}
provide the same result; from this, we conclude that the output of 
this algorithm is the same. 
\end{proof}

\end{document}